\newtheorem{theorem}{Theorem}[section]
\newtheorem{definition}[theorem]{Definition} 
\newtheorem{lemma}[theorem]{Lemma}
\newtheorem{proposition}[theorem]{Proposition}
\newtheorem{corollary}[theorem]{Corollary}
\newcommand{\call}[0]{{\mathcal L}}
\newcommand{\cale}[0]{{\mathcal E}}
\newcommand{\calf}[0]{{\mathcal F}}
\newcommand{\calk}[0]{{\mathcal K}}
\newcommand{\calm}[0]{{\mathcal M}}
\newcommand{\calg}[0]{{\mathcal G}}
\newcommand{\calz}[0]{{\mathcal Z}}
\newcommand{\N}[0]{{\mathbb N}}
\newcommand{\R}[0]{{\mathbb R}}
\newcommand{\Z}[0]{{\mathbb Z}}
\newcommand{\C}[0]{{\mathbb C}}
\newcommand{\ad}[0]{{\mbox{ \rm Ad}}}
\newcommand{\re}[1]{{(\ref{#1})}}
\newcommand{\be}[1]{\begin{equation} \label{#1}}
\newcommand{\en}[0]{\end{equation}}
\newcommand{\id}[0]{{\rm id}}			
\newcommand{\diag}[0]{{\mbox{diag}}}
\newcommand{\Aut}[0]{{\mbox{Aut}}}
\newcommand{\Ad}[0]{{\mbox{Ad}}}
\newcommand{\Hom}[1]{{{\mbox{Hom}}_{#1}}}
\newcommand{\triv}[0]{{{\rm id}}}   
\newcommand{\mor}[0]{{{\rm mor}}}
\title[corner embeddings into algebras of compact  
operators]{corner embeddings into algebras of compact 
operators  in $K$-theory}
\author[B. Burgstaller]{Bernhard Burgstaller}
\email{bernhardburgstaller@yahoo.de} 
\subjclass{16D10, 16D40, 19A13, 55P10, 19K35}
\date{\today}
\keywords{corner embedding, invertible, homotopy, 
diffeotopy,  
matrix stable, split-exact, functional module, $K$-theory, algebraic $KK$-theory}
\begin{document}


\begin{abstract}
\if 0
We consider modules whose algebra of compact operators 
allow embeddings into matrix algebras. 
\fi 
\if 0
If ordinary corner embeddings into matrix algebras are invertible 
in a category of algebras with homotopy equivalence then also those in such 
said compact operators. 
\fi 
\if 0
It turns out 
that in 
\fi 
We show that in 
$K$-theory-like categories 
many corner embeddings 
into 
a discrete algebra of compact operators are invertible,  
and consequently  
functors on 
splitexact algebraic 
$KK$-theory 
are faithful if and only if they are faithful 
on the subcategory generated by the homomorphisms.

\end{abstract}

\maketitle

\section{introduction}

In $C^*$-theory, 
the Kasparov stabilization theorem \cite{kasparov1980} 
 says  
that 
for every countably generated $C^*$-module 
$\cale$ over a $C^*$-algebra $A$, there is an isomorphism 
of $C^*$-modules $\cale \oplus A^\infty \cong A^\infty$, and 
thus $\cale$ is a direct summand of the free $C^*$-module $A^\infty$ and thus 
in a $C^*$-algebraic sense `projective'. 
In this note we consider the weaker property of having an injection $\cale \rightarrow  A^\infty$ of modules 
over algebras $A$ over $\C$ 
in the purely discrete 
algebraic setting.  
To see why, let us explain the motivation next: 

\if 0
Int his note it is shown that in stable, homotopy invariant
categories for algebras, 
more general corner emebeddings of the form 
$B \rightarrow \calk_A(A \otimes_\pi B  \oplus B)$ 
(algebra of 'compact operators') 
are invertible; but let us gives some
motivational background. 
\fi

 In 
\cite{cuntz}, Cuntz has introduced and considered
the 
universal {\em half-exact}, 
diffeotopy-invariant 
and (matrix) stable 
theory for the category of $m$-algebras 
(i.e. roughly speaking locally 
convex algebras), 
thereby presenting for that class of algebras 
a variant of the classical universal {\em splitexact}, homotopy invariant and stable  $KK$-theory 
for  $C^*$-algebras by Kasparov \cite{kasparov1981, kasparov1988}, or 
- even more closely - 
for the 
universal {\em half-exact}, homotopy invariant and 
stable $E$-theory 
for $C^*$-algebras by Higson  \cite{higson2} and 
Connes and Higson  \cite{conneshigson,conneshigson2}. 

In \cite{gk}, we have considered the universal {\em splitexact}, homotopy 
invariant and stable variant of $KK^G$-theory for $G$-equivariant algebras and rings 
called $GK^G$-theory. 
An important 
ingredient of $GK^G$-theory 
is that we allow 
for very general corner embeddings 
$A \rightarrow \calk_A(\cale \oplus A)$ 
axiomatically to be invertible. 
Here, the compact operators $\calk_A(\cale \oplus A) \subseteq \hom_A(\cale \oplus A)$ 
are roughly speaking those 
module homomorphisms on the $A$-module $\cale \oplus A$
which have 
a finitely generated range; 
in particular one has $M_\infty(A) \cong \calk_A(A^\infty)$.  
Besides the possibility of building more useful 
standard  
`Kasparov cycles' 
via representations on arbitrary modules,  
a
striking advantage is that 
for an algebra homomorphism $\pi :A \rightarrow B$ 
one can 
change the coefficient 
algebra of the 
module to obtain an invertible  corner embedding 
$B \rightarrow \calk_B(\cale \otimes_\pi B \oplus B)$ 
again. 
Consequently, and this is 
a main reason for considering such `unprojective' corner embeddings, one has a map 
between the algebras of adjointable operators 
\big (= 
roughly speaking multiplier algebras $\calm \big (\calk_A(\cale) \big )$ of algebras of compact 
operators\big), 
namely  
$\phi: \call_A(\cale \oplus A)  \rightarrow 
\call_B \big ((\cale \oplus A) \otimes_\pi B \big )$ 
defined by
$\phi(T) = T \otimes \id_B$, necessary in 
key lemma \cite[Lemma 9.5]{gk}
(fusion of an extended splitexact sequence with a
homomorphism).
Employing this 
key lemma in its proof, 
it follows 
for instance 
the validity of \cite[Theorem 10.8]{gk},  
namely that every functor 
on $GK^G$-theory  is faithful if and only if it
is faithful on the subcategory generated by the algebra
homomorphisms. 

\if 0
The essentail differnece is that $kk$-theory has 
exact seuqences of algebras with {\em linear} 
splits, and $GK$-theory short exact sequences with 
{\em homomorphsims} splits as defining building blocks.
  
Moreover, an important ingrediant of $GK^G$-theory 
is that we allowed 
very general corner embeddings 
$A \rightarrow \calk_A(\cale \oplus A)$ 
to be invertible. 
Here the algebra $\calk_A(\calf)$ of compact operators .....
\fi

G. G. Kasparov in his function as a journal editor 
had asked us what 
$GK(\R,\R)$ or $GK(\Z,\Z)$ for rings is. 
In  
\cite{gk} we gave a partial answer to his question 
in the sense    
that we completely answered it   
for the case 
in which only  
the most ordinary equivariant corner embeddings 
$A \rightarrow M_\infty(A)$ 
- as opposed to all above mentioned `unprojecive' 
corner embeddings - 
are axiomatically declared 
to be invertible in $GK^G$-theory. 
\if 0
axiomatically declared only the most 
ordinary equivariant corner embeddings 
$A \rightarrow M_\infty(A)$ 
to be invertible.  
\fi 
By lack of above mentioned change of coefficient 
algebra,  
many important results of \cite{gk} 
appear 
{\em a priori} to break down
under such restriction, but in this note we resurrect them 
indirectly  
by proving that the change of coefficient algebras 
in corner embeddings is still possible and thus all 
results of \cite{gk} automatically remain 
 correct.

Summarizing, more generally, 
the 
key assertion of this note is that 
in any category $\Lambda$ 
which is built on the category of algebras 
with approximate units,  
and 
which 
is 
diffeotopy invariant 
and 
matrix stable in the sense that 
ordinary corner embeddings $A \rightarrow M_\infty(A)$ are invertible in $\Lambda$,  
then 
the corner embeddings $A_n \rightarrow \calk_{A_n}
(A_1 \otimes_{\pi_1} A_2 \otimes_{\pi_2} \otimes 
\cdots \otimes_{\pi_n} A_n  \; \oplus \; A_n)$  
are also invertible in $\Lambda$, where $A_i$ are algebras
and $\pi_i$ algebra homomorphisms. 

Let us remark that 
for operator algebraists in $C^*$-theory this 
brings nothing new, 
because by the Kasparov stabilization theorem
the corner embedding 
$B \rightarrow \calk_B(\cale \oplus B)$ 
is evidently always invertible in $KK^G$-theory
for $C^*$-algebras $B$ and Hilbert modules $\cale$
over $B$ and $C^*$-algebraic compact operator algebras
$\calk_A$, see \cite[Remark 3.6.(iv)-(v)]{gk}. 

The overview of this note is that at the beginning of
Section \ref{sec2} we recall some notions and then come
to the central definition of functional extensions in 
Definition \ref{def32},
which imply that certain unprojective corner embeddings 
are invertible in $\Lambda$, see Proposition \ref{prop22}. 
In the following Sections \ref{sec3}-\ref{sec4} 
some  permanence properties like change of coefficient algebras for 
functional extensions are shown. 
In Section \ref{sec5} we enrich the possibilities 
by showing how we can construct suitable equivariant functional 
extensions out from non-equivariant ones if $G$ is a finite group. 
In the last Section \ref{sec6} we summarize our findings of this note 
and apply them to $GK^G$-theory as explained above. 

\if 0
The 
overview of this brief note ist that in key lemma 
\ref{lemma22}   
we show that a corner emebdding $A \rightarrow \calk_A(\cale \oplus A)$ 
is invertible 
in $\Lambda$, provided   
an injective homomorphism 
$\calk_A(\cale \oplus A) 
\rightarrow M_\infty(A)$
into the matrix algebra exist. 
To ensure it, a condition called functional extension 
on the module level is introduced in defintion \ref{def32} and 
permanence properties for 
it are successively shown therafter in this 
note for change of coefficient 
algebra in Section.. and 
composition of corner embeddings based on functional extensions
 in Section ..,  
 resulting in the main rsults.. 
\fi

\if 0

We show that sufficiantly many unrpojective corner emebddings 
are still invertible, also in more generaö categories.

In this note we indirectly improve this answer, 
not by improving or modifying the result there itself, but 
by strenghtens 
the theorem by showing 
that 
the category very of special $GK^G$-theory considered there incoorporates actually 
considerably more invertible `unprojective' corner 
embeddings than a priori obvous.

Such change of coefficaent rings are also important in
$KK$-theory, for example in lemma..
Actually
For operator algebraist in $C^*$-theory this 
brings nothng new, 
because by the Kasparov stablization theorem
the coerner emebddings 
$B \rightarrow \calk_B(\cale \oplus B)$
is evidently always invertible in $KK$-theory
for $C^*$-alagebras $B$ and Hilbert modules $\cale$
over $B$, see ..., 
but for other algebras that is ad hoc 
unclaer. 
Beaeuse of the great usage of its invertibility,
we axiomatically declared it to be invertible in
$GK^G$-theory for all functional modules 
$\cale$ over $B$.

The compact operators of $\hom_A(\cale,\calf)$ 
are roughly speaking those operators which have 
a finitely generated range. 
Moreover, 
modules are supposed to be functional modules, 
meaning they have a distinguished functional space. 
The ordinary corner embeddings  are supposed of the simple 
form

The compact operators are the subalgebra of 
$\hom_A(\cale,\calf)$ 
consiting of module homomorphisms with finitely generated
range.

which 
is the linear span of elementary operators $\theta_{\xi,\eta}$ of the form
$\theta_{\xi,\eta}$
\fi

\section{Invertible corner embeddings} 
\label{sec2}

In this section we shall prove the central assertion 
in 
Proposition \ref{prop22} that 
corner embeddings $A \rightarrow \calk_A(\cale \oplus A)$ 
into an algebra of compact operators are invertible in certain 
(`$K$-theory-like') categories, if one 
is provided with  a functional extension
$\cale \rightarrow A^n$ (Definition \ref{def32}). 

\if 0
This note centeres around injective 
homomoprhism 
$f: \calk_A(\cale) \rightarrow M_n(A)$ 
for 
any cardinality $n$.  
We will later provide a wealth of such 
homomoprhisms, but for the moment
haste to the central assertion and goal
of such homomoprhsims in lemma  
\ref{lemma22}. 
\fi 

To this end, we need to recall 
some used 
notations from \cite{gk} before. 
All algebras are understood to be over the field $\C$
(and one may equally well take $\R$; 
this note seems largely also to work with rings as in \cite{gk}, 
but we prefer algebras for convenience).  
Further, all algebras $A$ are supposed to be 
{\em quadratik}, i.e. each element of $A$ is presentable 
as a finite sum  $\sum_{i=1}^n a_i b_i$ of products for $a_i,b_i \in A$.    
We fix a discrete group $G$.    
All structures like algebras and modules 
are supposed to be equipped with $G$-actions throughout, 
and their topology, if considered, to be discrete. 
All homomorphisms of algebras and modules are understood 
to be $G$-equivariant. 

For the next definitions fix an algebra $(A,\alpha)$ over $\C$. 
Here, $\alpha: G \rightarrow \Aut(A)$ is a {\em $G$-action} of algebras, that is, a group homomorphism. 
All $A$-modules are simultaneously supposed to be $\C$-linear vector spaces. 
A {\em $G$-action} on a right $A$-module $M$ is a  
group homomorphism $S: G \rightarrow \mbox{AddMaps}(M) $  
into the invertible additive maps on $M$ such that
$S_g(\xi a)= S_g(\xi) \alpha_g(a)$ for all $g \in G, \xi \in \cale, a \in A$. 
If then two $G$-equivariant $A$-modules  $(M,S)$ and $(N,T)$ 
are given,  $\Hom A (M,N)$ denotes  the abelian group of all right  $A$-module homomorphisms 
equipped 
with the {\em $G$-action}  
$\Ad (S,T)$, that is, we set 
$g(\phi) := \Ad (S,T) (\phi ):= T_{g} \circ \phi \circ S_{g^{-1}} \in \Hom A (M,N)$ for all $g \in G$ and $\phi \in \Hom A (M,N)$.

We turn the `functional space' $\Hom A (M,A)$ 
of the right $A$-module $M$ 
to a left $A$-module 
by setting 
$(a \phi)(\xi):= a \phi(\xi)$  for all 
$a \in A, \xi \in M$ and $\phi \in \Hom A (M,A)$. 

\begin{definition}[Functional modules]
\label{def12}    
{\rm
Assume 
we are given a right $A$-module $M$ and for it a 
distinguished {\em functional space} $\Theta_A(M) \subseteq \Hom A (M,A)$ which is a $G$-invariant, left $A$-submodule of $\Hom A (M,A)$. 
Then we call $\big (M,\Theta_A(M) \big )$ a 
{\em right functional $A$-module}.  
}
\end{definition}

The functional space $\Theta_A(M)$ will usually not be notated in $M$. 
Recall from \cite[Definition 2.13]{gk}, that 
a functional module $\cale$ is called  {\em cofull}
if 
$\cale = \{\sum_{i=1}^n \xi_i \phi_i(a_i) \in \cale | 
n \ge 1, \xi_i \in \cale, 
\phi_i \in \Theta_A(\cale), a_i \in A \}$.  
Accordingly, we say that $\Theta_A(\cale)$ is {\em cofull} 
if $\Theta_A(\cale) = 
\{\sum_{i=1}^n \phi_i(\xi_i) \tau  \in \Theta_A(\cale) | 
n \ge 1, \tau \in \Theta_A(\cale), 
\phi_i \in \Theta_A(\cale), \xi_i \in \cale \}$ 
(now $\cale$ is the functional space of $\Theta_A(\cale)$).

%
For an algebra $A$, the standard functional module 
is $(A,\alpha,\Theta_A(A) := A)$ with functionals 
by left 
multiplication, that is, $b(a) := b a$
for $a,b \in A$. 
From it, we get the standard direct sum 
$(A^n := \bigoplus_{i=1}^n A, \bigoplus_{i=1}^n \alpha, A^n)$, also
more generally explained next:

\begin{definition}[Direct sum of functional modules]
{\rm 

If $ \big (\cale_i, S_i , \Theta_A(\cale_i) \big )_{i \in I}$ 
is a family of functional $A$-modules, then we define 
their {\em direct sum} to be the 
canonical functional $A$-module 
$ \big (\bigoplus_{i \in I} \cale_i, S:=\bigoplus_{i \in I} S_i , 
\bigoplus_{i \in I} \Theta_A(\cale_i) \big )$
 with $S \big (\bigoplus_{i \in I} \xi_i \big ) := \sum_{i \in I} S_i(\xi_i)$
and $\big ( \bigoplus_{i \in I}  \phi_i \big )  \big (\bigoplus_{i \in I} \xi_i  \big ) := 
\sum_{i \in I} \phi_i  (\xi_i)$
for  $\bigoplus_{i \in I}  \phi_i  \in 
\bigoplus_{i \in I} \Theta_A(\cale_i)$. 
(All direct sums are understood to be comprised of only elements 
with almost all coordinate entries to be zero.) 
}
\end{definition}

\if 0
A {\em right functional $A$-module} is a pair $(M,\Theta_A(M))$,  
where  
 $\Theta_A(M) \subseteq \Hom A (M,A)$ 
- a distinguished {\em functional space} - is a $G$-invariant, left $A$-submodule of $\Hom A (M,A)$. 
\fi

\begin{definition}[Compact operators] 
		\label{defcompact}
{\rm

To right functional $A$-modules $M$ and $N$ is 
associated 
the $G$-invariant, abelian subgroup 
(under addition) of {\em compact operators} $\calk_A (M,N) \subseteq \Hom A (M,N)$
which consists of all 
finite sums  of  all {\em elementary compact operators}  
$\theta_{\eta,\phi} \in \Hom A (M,N)$ defined by
$\theta_{\eta,\phi}(\xi)= \eta \phi(\xi)$ for $\xi \in M, \eta \in N$ and $\phi \in \Theta_A(M)$.
}
\end{definition}

Clearly, $\calk_A(M):= \calk_A(M,M)$ is an algebra under composition. 
Recall that its $G$-action is $\ad(S):= \ad(S,S)$ for $M=(M,S)$.  

\begin{definition}[Corner embedding]			\label{def23}
{\rm 
If $(\cale,S)$ is a functional module over $(A,\alpha)$ then 
a   {\em corner embedding} $e: (A,\alpha) \rightarrow 
\calk_A  \big ((\cale \oplus A, S \oplus \alpha)  \big )$  
is the algebra homomorphism defined by $e(a)(\xi \oplus b)= 0_\cale \oplus ab$
for $\xi \in \cale, a,b \in A$. 
\if 0
It is called {\em very special} 
if it is of the simple form 
$e: (A,\alpha) \rightarrow (M_n \otimes A, \gamma \otimes \alpha)$
with $e(a)= \diag(a,0,\ldots, 0)$.   
\fi 
}
\end{definition}

Also corner embeddings up to algebra isomorphism 
will be called that way.
%
%
%
For any cardinality $n$, 
put $M_n := M_n(\C)$ the matrix algebra of $n \times n$-matrices 
where almost all entries are zero, and $M_n(A) := M_n \otimes A$. 
Recall that we have an easy algebra isomorphism 
\be{eq19}
\calk_A \big ((A^n,S) \big ) 
= \big (\calk_A(A^n), \ad(S) \big ) 
\cong \big (M_n(A), \ad(S) \big ) 
\en  
by 
regarding $M_n(A) \subseteq \hom_A(A^n)$ by matrix-vector multiplication as usual, confer also the 
related lemmas 
\cite[Lemma 2.23]{gk} and \cite[Lemma 3.2]{gk2}.  
We may use 
the above isomorphism without saying sometimes, and thus a corner 
embedding $A \rightarrow M_n(A)$ is understood in the usual sense. 
As a particular case we have $(A,\alpha) \cong (\calk_A(A),\ad(\alpha))$.  
 We often shall use $M_n$ and $\C^n$ for any cardinality $n$, and 
still write for example ``$1 \le i \le n$" as if $n$ was finite, for simplicity. 
  
\begin{definition}[Change of coefficient algebra of functional modules]   
\label{def25}
{\rm 
For  
a functional $(A,\alpha)$-module $\big (\cale, S , \Theta_A(\cale) \big )$ and 
a $G$-equivariant algebra homomorphism 
$\pi : (A,\alpha) \rightarrow (B,\beta)$, we define the {\em internal 
tensor product}   (change of coefficient algebra) 
to be the functional $(B,\beta)$-module  
$\cale \otimes_\pi B   := \cale \otimes B / 
{\rm span}  
\{ \xi a \otimes b 
- \xi \otimes \pi(a) b |\, 
\xi \in \cale, a \in A, b \in B\}$ 
equipped with the diagonal action $S \otimes \beta$ and the functional space 
$\Theta_B(\cale \otimes_\pi B)$ 
comprised of the linear span of all 
elementary functionals 
$\phi \otimes c \in \hom_B(\cale \otimes_\pi B , B)$ 
defined by $(\phi \otimes c)(\xi \otimes b) := c \pi \big (\phi(\xi ) \big ) b$ 
for all $\phi \in \Theta_A(\cale), \xi \in \cale, b,c \in B$. 

}
\end{definition}

\if 0
\section{Rings and Functional Modules}
					\label{sec2}

\begin{definition}[$G$-action on module]
{\rm

Let $(A,\alpha)$ be a ring. 
Let 
$M$ be a right $A$-module. 

A {\em $G$-action} $S$ on $M$ is a  
group homomorphism 
= abelian group homomorphisms) 
into the invertible additive maps on $M$ such that
and 
$S(\xi a)= S(\xi) \alpha(a)$ for all $\xi \in \cale, a \in A$. 

}
\end{definition}

\begin{definition}[$G$-action on Module Homomorphisms]
{\rm

Let $(A,\alpha)$ 
be a ring. 
Let $(M,S)$ and $(N,T)$ be right $A$-modules. 
We define $\Hom A (M,N)$ to be the abelian group of all right $A$-module homomorphisms 
$\phi:M \rightarrow N$.

We equip $\Hom A (M,N)$ with the $G$-action 
$\Ad (S,T)$, that is, we set 
$g(\phi):= T_{g} \circ \phi \circ S_{g^{-1}} \in \Hom A (M,N)$ for all $g \in G$ and $\phi \in \Hom A (M,N)$. 

}
\end{definition}

We also write $\Hom A (M):=\Hom A (M,N)$ if $(M,S)=(N,T)$. 
This is a ring under concatenation and its 
$G$-action $\Ad(S)$.

\begin{definition}[Functionals]			\label{def12}    
{\rm

Let $(A,\alpha)$ be a ring. 
Let $M$ be a right $A$-module. 
Turn $\Hom A (M,A)$ to a left $A$-module 
by setting 
$(a \phi)(\xi):= a \phi(\xi)$  for all 
$a \in A, \xi \in M$ and $\phi \in \Hom A (M,A)$. 

Assume 
we are given a 
distinguished {\em functional space} $\Theta_A(M) \subseteq \Hom A (M,A)$ which is a $G$-invariant, left $A$-submodule of $\Hom A (M,A)$. 

Then we call $(M,\Theta_A(M))$ a 
{\em right functional $A$-module. 
}

}
\end{definition}

The functional space $\Theta_A(M)$ will usually not be notated in $M$, as it is called anyway always in the same way. 

\begin{definition}[Compact operators] 
		\label{defcompact}
{\rm

Let $(A,\alpha)$ be a ring. 
To right functional $A$-modules $(M,S)$ and $(N,T)$ is 
associated 
the $G$-invariant, abelian subgroup 
(under addition) of {\em compact operators} $\calk_A (M,N) \subseteq \Hom A (M,N)$
which consists of all 
finite sums  of  all {\em elementary compact operators}  
$\theta_{\eta,\phi} \in \Hom A (M,N)$ defined by
$\theta_{\eta,\phi}(\xi)= \eta \phi(\xi)$ for $\xi \in M, \eta \in N$ and $\phi \in \Theta_A(M)$.
}
\end{definition}

We write $\calk_A(M) \subseteq \Hom A (M)$ for the $G$-invariant subring $\calk_A(M,M)$. 
To observe $G$-invariance, we compute
$g(\theta_{\xi,\phi})(\eta)= T_g ( \xi \phi (S_{g^{-1}}(\eta)) = \theta_{T_g(\xi),g(\phi)}(\eta)$. 
\fi


\if 0
\begin{definition}
{\rm 
A module $\cale$ is 
is said to have {\em matricial compct operators} if  
there is an injective  homomoprhism 
$f: \calk_A(\cale) \rightarrow M_n(A)$ 
for some 
cardinality $n$.  
}
\end{definition}
\fi

\if 0
Let $\lambda$ be any category whose objects 
includes algebras, and whose morphisms includes homomorphisms,
nad ordinary very special corner embeddings are invertibel,
and homotopic  

Consider 
the category of algebras $\Lambda$, possibly 
restricted to a sm 

Consider a full subcategory $\Lambda'$ of the category
of algebras, containing all algebras we need. 
Define then $\Lambda$ to be 
\fi


\if 0
In lemma \ref{lemma22} we have seen how injections 
$\calk_A(\cale \oplus A ) \rightarrow M_n(A)$ 
garantue invertible corner emebeddings 
$A \rightarrow \calk_A(\cale \oplus A )$ in $\Lambda$,  
but to get some permanence properties 
of such injections - particulalrly the proof of the change of coefficient 
algebra 
of $\cale$ in Lemma \ref{lemma41} below - 
requires us to demand a somewhat stronger condition
called functional extension 
on the module level, cf. Lemma \ref{lemma31} . 
\fi

The following 
definition is reminiscent of the 
defining equation $\langle U \xi, U \eta \rangle 
= \langle \xi, \eta \rangle$ 
for an operator $U: \cale \rightarrow \calf$ 
between  Hilbert modules 
to be unitary.

\if 0
The following defintition is reminiscent of the 
unitary operator equation $\langle U \xi, U \eta \rangle 
= \langle \xi, \eta \rangle$ for modules.
 \fi 

\begin{definition}[Functional module homomorphism]  
\label{def31}  
{\rm 
A  {\em homomorphism of functional modules}  
is a pair $(U,U^*):  \big (\cale, \Theta_A(\cale) \big ) 
\rightarrow \big (\calf,\Theta_A(\calf) \big )$ where $U: \cale \rightarrow \calf$ 
is a $G$-equivariant right $A$-module homomorphism 
and $U^* : \Theta_A(\cale) \rightarrow \Theta_A(\calf)$
 a $G$-equivariant left $A$-module homomorphism 
such that
$$U^* \big (\phi  \big ) \big (U(\xi) \big ) = \phi(\xi),  \qquad \forall \phi \in 
\Theta_A(\cale), \xi \in \cale .$$ 

}
\end{definition}

This is 
the central 
definition of this note:

\begin{definition}[Functional extension]		
\label{def32}

{\rm 
An 
injective functional $A$-module homomorphism $U:\cale \rightarrow \calf$ 
is a {\em functional extension}   
%
if 
for all finitely many functionals 
$\phi_1, \ldots , \phi_n \in \Theta_A(\cale)$ and 
all 
elements $\eta \in \calf$
there exists 
an element $\xi \in \cale$ such that
%
$$U^* \big (\phi_i \big )(\eta) = \phi_i(\xi), \qquad \forall 1 \le i \le n.$$ 

\if 0
(The correct version for topological modules 
was
that there exists
a net $(\xi_k)$ in $\cale$ such that
$\widehat \phi_i(\eta) = \lim_k \phi_i(\xi_k)$ 
 $\forall 1 \le i \le n$.) 
 \fi 

}
\end{definition}

\if 0
We call such an $U$ also just an {\em functional extension},
and if $\cale$ is an $A$-module, 
%
a functioal extension $U: \cale \rightarrow A^n$ also 
{\em cartesion}. 

A functioal extension $U: \cale \rightarrow A^n$ 
for $\cale$ an $A$-module is also  
called  
{\em cartesion}. 
\fi

We remark that a functional homomorphism $U$ is automatically injective 
if $\Theta_A(\cale)$ separates the points of $\cale$, 
and 
automatically  
a functional extension if 
$U$ is surjective.  
Moreover,  
for a functional extension $(U,U^*)$,  
$U^*$ is automatically injective, and 
that $U^*$ is 
additive and $A$-homogeneous follows automatically  from the rest
in Definition \ref{def32} 
(similar proof as a unitary operator is 
automatically linear). 

For the canonically defined external tensor product
$\cale \otimes \calf$ of functional modules see 
\cite[Definition 2.10]{gk}. 
We shall often indicate the format of a $G$-action, 
for instance in $(\cale \otimes \calf, U \otimes V)$, and it is implicitely 
understood that it must meet that form. 
The following lemma is easy:  

\begin{lemma}   \label{lem22}

Functional homomorphisms and functional extensions, 
respectively,   
$\big (U_i , U_i^* \big )_{i \in I}$ ($I$ any set) 
are closed under taking direct sums, 
the 
external tensor products and compositions, that is, one has 
functional homomorphisms and functional extensions, 
respectively, 
$\big (\bigoplus_{i \in I} U_i , \bigoplus_{i \in I}  U_i^* \big ) : \bigoplus_{i \in I} \cale_i
\rightarrow \bigoplus_{i \in I} \calf_i$,  
$\big (U_1 \otimes U_2  , U_1^* \otimes U_2^*\big  ) 
: \cale_1 \otimes \cale_2 \rightarrow \calf_1 \otimes \calf_2$ 
and $(U_2 \circ U_1, U_2^* \circ U_1^*) 
: \cale_1 \rightarrow \calf_2$.

\end{lemma}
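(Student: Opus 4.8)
The claim is that three natural constructions — direct sums, external tensor products, and compositions — preserve both the property of being a functional homomorphism and the stronger property of being a functional extension. I would handle the six assertions as three pairs, and in each case first verify that the putative pair $(V,V^*)$ is well-defined (the second components $\bigoplus U_i^*$, $U_1^*\otimes U_2^*$, $U_2^*\circ U_1^*$ really do land in the prescribed functional spaces $\bigoplus\Theta_A(\calf_i)$, $\Theta_A(\calf_1\otimes\calf_2)$, $\Theta_A(\calf_2)$ of the target), then verify the defining identity $V^*(\phi)(V\xi)=\phi(\xi)$ of Definition \ref{def31}, and finally, for the extension statements, verify the lifting property of Definition \ref{def32}. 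The $G$-equivariance of each $V$ and $V^*$ is immediate since all the component maps are equivariant and the constructions $\bigoplus$, $\otimes$, $\circ$ respect the corresponding diagonal/tensor/composite $G$-actions, so I would dispatch that in one sentence.

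For the \emph{functional homomorphism} identities: in the direct sum case, write a general element of $\bigoplus_{i\in I}\cale_i$ as $\bigoplus_i\xi_i$ and a general functional as $\bigoplus_i\phi_i$ (almost all coordinates zero), and compute $\big(\bigoplus_i U_i^*\big)\big(\bigoplus_i\phi_i\big)\big(\big(\bigoplus_i U_i\big)(\bigoplus_i\xi_i)\big)=\sum_i U_i^*(\phi_i)(U_i\xi_i)=\sum_i\phi_i(\xi_i)=\big(\bigoplus_i\phi_i\big)(\bigoplus_i\xi_i)$, using the defining identity for each $U_i$. For the external tensor product I would check the identity on elementary tensors $\xi_1\otimes\xi_2$ against elementary functionals $\phi_1\otimes\phi_2$, where $(\phi_1\otimes\phi_2)(\xi_1\otimes\xi_2)=\phi_1(\xi_1)\phi_2(\xi_2)$ (or whatever normalization \cite[Definition 2.10]{gk} prescribes), and then extend by additivity — since $\Theta_A(\calf_1\otimes\calf_2)$ is by definition the span of elementary functionals and the module is spanned by elementary tensors, this suffices. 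For the composition, $(U_2^*\circ U_1^*)(\phi)\big((U_2\circ U_1)(\xi)\big)=U_2^*\big(U_1^*(\phi)\big)\big(U_2(U_1\xi)\big)=U_1^*(\phi)(U_1\xi)=\phi(\xi)$, applying the identity for $U_2$ then for $U_1$; note one must first observe $U_1^*(\phi)\in\Theta_A(\calf_1)$ so that the identity for $U_2$ applies.

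For the \emph{functional extension} lifting property: given finitely many functionals in the source and an element $\eta$ of the target, I must produce a preimage-like element of the source satisfying the $n$ equations. In the direct sum case, a finitely-supported functional $\bigoplus_i\phi_i$ involves only finitely many indices, and an element $\eta=\bigoplus_i\eta_i$ of $\bigoplus_i\calf_i$ is also finitely supported; apply the extension property of each relevant $U_i$ to the finite collection of functionals appearing in coordinate $i$ and the element $\eta_i$, get $\xi_i\in\cale_i$, and assemble $\bigoplus_i\xi_i$. For composition, given $\phi_1,\dots,\phi_n\in\Theta_A(\cale_1)$ and $\eta\in\calf_2$: first use that $U_2$ is a functional extension to find $\zeta\in\calf_1$ with $U_1^*(\phi_i)(\zeta)=(U_2^*\circ U_1^*)(\phi_i)(\eta)$ for all $i$ (applied to the functionals $U_1^*(\phi_i)\in\Theta_A(\calf_1)$); then use that $U_1$ is a functional extension to find $\xi\in\cale_1$ with $\phi_i(\xi)=U_1^*(\phi_i)(\zeta)$ for all $i$; chaining gives $\phi_i(\xi)=(U_2^*\circ U_1^*)(\phi_i)(\eta)$.

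\textbf{Main obstacle.} The only genuinely delicate case is the external tensor product extension: given $\phi_1,\dots,\phi_n\in\Theta_A(\cale_1\otimes\cale_2)$ — each a \emph{finite sum} of elementary functionals $\phi\otimes\psi$ — and a general $\eta\in\calf_1\otimes\calf_2$ — a finite sum of elementary tensors $\zeta\otimes\omega$ — I need a single $\xi\in\cale_1\otimes\cale_2$ working simultaneously for all $n$ equations. The trouble is that the two tensor factors interact: one cannot naively lift "in each factor independently" because the functionals and vectors are entangled across the tensor product. I expect the resolution is to collect the finitely many first-factor functionals $\phi^{(j)}$ and second-factor functionals $\psi^{(j)}$ appearing in all the $\phi_i$, apply the extension property of $U_1$ to the $\phi^{(j)}$'s together with each first-factor vector of $\eta$, and similarly for $U_2$, obtaining finitely many lifts that can be combined into one element $\xi$ of the algebraic tensor product; the finite-support/finite-sum nature of everything (built into the "quadratik" hypothesis and the definition of $\Theta_A$ of a tensor product) is exactly what makes this bookkeeping terminate. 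I would present this carefully but without belaboring the index juggling, since the paper elsewhere treats such manipulations as routine.
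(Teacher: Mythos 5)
Your proposal is correct, and since the paper states this lemma without proof (dismissing it as easy), your routine verification is exactly the intended argument: coordinatewise lifts for direct sums, chaining the two lifting properties for compositions, and for the tensor case lifting each first-factor vector $\zeta_l$ of a fixed representation $\eta=\sum_l\zeta_l\otimes\omega_l$ simultaneously against all first-factor functionals occurring in the given $\Phi_k$ (and likewise in the second factor), then pairing the lifts term by term as $\xi=\sum_l x_l\otimes y_l$. That last step, which you flag as the main obstacle, is resolved correctly by your sketch, so there is no gap.
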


This is the key lemma why to consider functional extensions:
\if 0
Functional extensions provide   
us with the injective algebra homomoprhisms 
assumed in lemma \ref{lemma22}:  
 \fi

\if 0
We leave the simple proof of the folloing observations
to the reader: 

\begin{lemma}

{\rm (i)} In defintion \ref{def31}, $U$ is injective 
if $\Theta_A(\cale)$ separates the points of $\cale$. 
Moreover, if $U$ is surjective it satisfies the functional 
extension property.   
 
{\rm (ii)} 
In definition \ref{def32}, 
$U^*$ is injective, and 
that $U^*$ is 
additive and $A$-homogenuous follows automatic  from the rest. 
%

\end{lemma}
\fi

\begin{lemma}
\label{lemma31}

If 
$U: \cale \rightarrow  \calf$  
is an functional extension,   
then 
there is a well-defined injective 
algebra  
homomorphism  
$f:\calk_A(\cale ) \rightarrow  \calk_A(\calf): 
f(\theta_{\xi,\phi})= \theta_{U(\xi),U^*(\phi)}$. 

\if 0
If 
$T: (\cale, U) \rightarrow  (\calf, V)$  
is an functional extension,   
then the there is a well-defined injective 
homomorphism  
$$f: \calk_A \big ((\cale, U)  \big ) \rightarrow  
\calk_A\big ( (\calf, V)   \big )  : 
f(\theta_{\xi,\phi})= \theta_{U(\xi),U^*(\phi)}$$
\fi 

\end{lemma}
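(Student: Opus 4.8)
The plan is to first verify that the prescription $f(\theta_{\xi,\phi}) = \theta_{U(\xi),U^*(\phi)}$ extends to a \emph{well-defined} additive map $\calk_A(\cale) \to \calk_A(\calf)$, then check it is multiplicative, and finally establish injectivity, where the functional extension property enters in an essential way. Well-definedness is the first genuine issue: an element of $\calk_A(\cale)$ is only given \emph{non-uniquely} as a finite sum $\sum_k \theta_{\xi_k,\phi_k}$, so I must show that if $\sum_k \theta_{\xi_k,\phi_k} = 0$ in $\Hom A(\cale)$ then $\sum_k \theta_{U(\xi_k),U^*(\phi_k)} = 0$ in $\Hom A(\calf)$. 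Applying the candidate operator to an arbitrary $\eta \in \calf$ gives $\sum_k U(\xi_k)\, U^*(\phi_k)(\eta)$; using the functional extension property for the finitely many functionals $\phi_1,\dots,\phi_n$ appearing, pick $\xi \in \cale$ with $U^*(\phi_k)(\eta) = \phi_k(\xi)$ for all $k$, so the expression becomes $\sum_k U(\xi_k)\,\phi_k(\xi) = U\big(\sum_k \xi_k \phi_k(\xi)\big) = U\big((\sum_k \theta_{\xi_k,\phi_k})(\xi)\big) = U(0) = 0$. Since $\eta$ was arbitrary, the image operator vanishes; this is the step I expect to be the main obstacle, as it is the whole reason the functional extension hypothesis is imposed.

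Next I would check that $f$ respects the algebra structure. Additivity is immediate from the definition on sums of elementary operators once well-definedness is in hand. For multiplicativity it suffices to check on elementary operators: $\theta_{\xi,\phi}\circ\theta_{\zeta,\psi} = \theta_{\xi\cdot\phi(\zeta),\psi}$ since $(\theta_{\xi,\phi}\circ\theta_{\zeta,\psi})(\mu) = \xi\,\phi\big(\zeta\,\psi(\mu)\big) = \xi\,\phi(\zeta)\,\psi(\mu)$, using that $\phi$ is a right $A$-module homomorphism. Applying $f$ to both sides, I need $\theta_{U(\xi\cdot\phi(\zeta)),U^*(\psi)} = \theta_{U(\xi),U^*(\phi)}\circ\theta_{U(\zeta),U^*(\psi)} = \theta_{U(\xi)\cdot U^*(\phi)(U(\zeta)),U^*(\psi)}$; since $U$ is a right $A$-module homomorphism $U(\xi\cdot\phi(\zeta)) = U(\xi)\cdot\phi(\zeta)$, and by the defining identity of a functional module homomorphism $U^*(\phi)(U(\zeta)) = \phi(\zeta)$, so both sides agree. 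One should also note $G$-equivariance of $f$, which follows since $U$ and $U^*$ are $G$-equivariant and $g(\theta_{\xi,\phi}) = \theta_{S_g(\xi),g(\phi)}$.

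Finally, injectivity. Suppose $T = \sum_k \theta_{\xi_k,\phi_k} \in \calk_A(\cale)$ has $f(T) = 0$, i.e.\ $\sum_k \theta_{U(\xi_k),U^*(\phi_k)} = 0$ in $\Hom A(\calf)$. Evaluating at $U(\xi)$ for arbitrary $\xi \in \cale$ and using $U^*(\phi_k)(U(\xi)) = \phi_k(\xi)$ gives $\sum_k U(\xi_k)\,\phi_k(\xi) = U\big(T(\xi)\big) = 0$; since $U$ is injective, $T(\xi) = 0$ for all $\xi$, hence $T = 0$. This uses injectivity of $U$ together with the functional module homomorphism identity, but not the full extension property, so the extension property is really only needed for well-definedness. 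The whole argument is a sequence of short computations on elementary operators; the one subtle point is to make sure the finite set of functionals to which the extension property is applied is chosen after fixing the representation $\sum_k \theta_{\xi_k,\phi_k}$ and the element $\eta$, so that the quantifiers line up correctly.
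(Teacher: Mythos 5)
Your proof is correct and follows essentially the same route as the paper: well-definedness via the functional extension property applied to the finitely many $\phi_k$ in a vanishing representative, and injectivity by evaluating at $\eta = U(\xi)$ and using $U^*(\phi_k)(U(\xi)) = \phi_k(\xi)$ together with injectivity of $U$. Your explicit multiplicativity check on elementary operators is a detail the paper leaves implicit, but it matches the intended argument.
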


\begin{proof}

{\bf (a)} 
We show that $f$ is well-defined. 
If 
$x:=\sum_{i=1}^m \theta_{\xi_i,\phi_i}=0$  in $\calk_A(\cale)$ then
$\sum_i \xi_i \phi_i (\xi) = 0$ for all $\xi \in \cale$. 
Given $\eta \in \calf$, by the functional extension property 
of $U$ choose $\xi \in \cale$ 
such that  $U^*(\phi_i)(\eta)= \phi_i(\xi)$ for all $1 \le i \le m$. 
Consequently,   
$$f(x)(\eta) = \sum_{i=1}^m \theta_{U(\xi_i), U^*(\phi_i)} (\eta) 	
= \sum_{i=1}^m U(\xi_i)  U^*( \phi_i )(\eta) 
= U \Big (\sum_{i=1}^m \xi_i \phi_i (\xi) \Big ) = 0 .$$

{\bf (b)} 
To prove injectivity of $f$, suppose $f(x)=0$. 
%
By the functional module embedding of $U$, we 
have the 
above  
identities for all $\xi \in \cale$
and $\eta:= U(\xi)$, and as $U$ is injective, 
$x=0$ follows from that.
%
%
%
%
\if 0
$f(\theta_{\xi,\phi}) f(\theta_{\eta,\psi})
= \theta_{U \xi , U^* \phi} \theta_{U \xi , U^* \psi}
= \theta_{U \xi  \cdot U^* (\phi)(U(\xi)) ,U^* \psi} 
= \theta_{U \xi \cdot \phi(\xi) ,U^* \psi}$
\fi 
\end{proof}

\if 0
All algebras and modules are discrete. 
An approximate unit of an algebra $A$ is a net 
$(x_i)_{i \in I}$ in $A$ such that for all $a \in A$
there is an $j \in I$ such that $x_i a = a x_i = a$ for all
$i \ge j$. 

Even this seems somewhat artificial we regard the 
considered  
disrete topology here 

Even the discrete topology seems here somewhat artificial, 
we 
assume that the concepts here carry then 
over to other topologies, to be considered elsewere. 
  \fi

\if 0
\begin{lemma}

The 
\end{lemma}
\fi

The following definition merely fixes a notation: 

\begin{definition}[Category $\Lambda$] 	\label{def28} 
{\rm 
Let $\Lambda$ be 
any given category whose object class  
is a subclass of algebras 
(sufficiently big 
as to contain all the constructions we consider in this note), whose morphism sets
$\mor(A,B)$ 
contain at least all algebra homomorphisms $f: A \rightarrow B$, 
in which 
each corner embedding $e: (A,\alpha) \rightarrow \big (M_n \otimes A, \gamma \otimes \alpha \big )$ is invertible 
(for all cardinalities $n$ or only up to some cardinality $\chi$, but must 
include at least the finite
$n \in \N$),
and in which for 
each 
diffeotopy $f: A \rightarrow B[0,1]:= B \otimes C^\infty([0,1],\C)$ 
(smooth functions),
one has $f_0 = f_1$
(evaluation at endpoints). 
 }
\end{definition} 

One can show that the notion of homotopy by diffeomorphisms 
is an equivalence relation, see \cite{cuntz}.  
We shall particularly use ordinary {\em rotation homotopies} 
$A \rightarrow M_2(A)[0,1]$ defined by applying 
$\ad \Big (\begin{matrix} \cos(t) & \sin(t)  \\
	- \sin(t) & \cos(t) \end{matrix} \Big )$
to 
an algebra homomorphism $A \rightarrow M_2(A)$.  
The composition of morphisms is written from left to right 
in $\Lambda$,  
that is, $f g = g \circ f$ 
for $A \stackrel{f}{\rightarrow} B \stackrel{g}{\rightarrow} C$.  
%
The next key proposition shows 
how  
functional extensions $\cale \rightarrow B^n$  
induce `unprojective' invertible corner embeddings 
in $\Lambda$: 

\if 0
The next lemma shows how  invertible 
corner emebddings into compact operator alegebras 
might be obtained in $\Lambda$:  
\fi 

\if 0
The next lemma shows how to perhaps obtain invertible 
corner emebddings into compact operator alegebras 
from ordinary iinvertible corner emebeddings.  
\fi

\if 0
The next key lemma shows how  invertible 
corner emebddings into compact operator alegebras 
may be obtained 
from ordinary iinvertible corner emebeddings.  
\fi

\begin{proposition}		\label{prop22}

If 
$V: (\cale,U) \rightarrow B^n = ( \C^n \otimes B , \delta \otimes \beta)$ is 
a functional extension of functional $(B,\beta)$-modules
($n$ any cardinality), then the corner  embedding 
$e: (B,\beta) \rightarrow
 \calk_B \big ( (\cale \oplus B, U \oplus \beta) \big ) $  is  invertible  in $\Lambda$.  
\end{proposition}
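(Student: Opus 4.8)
The plan is to reduce the invertibility of the ``unprojective'' corner embedding $e\colon B \to \calk_B(\cale \oplus B)$ to the invertibility of an \emph{ordinary} corner embedding $B \to M_m(B)$, which holds by hypothesis on $\Lambda$ together with diffeotopy invariance. First I would use Lemma \ref{lem22} to upgrade the given functional extension $V\colon \cale \to B^n$ to a functional extension $V \oplus \id_B \colon \cale \oplus B \to B^n \oplus B = B^{n+1}$, and then apply Lemma \ref{lemma31} to obtain an injective algebra homomorphism $f\colon \calk_B(\cale \oplus B) \to \calk_B(B^{n+1}) \cong M_{n+1}(B)$, using the isomorphism \re{eq19}. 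By construction $f$ carries the image of the corner embedding $e$ into a corner of $M_{n+1}(B)$: indeed $f(e(b)) = f(\theta_{0\oplus b',\,\phi})$-type sums land on the $B$-summand coming from the $\id_B$ factor, so $f \circ e$ agrees (up to the identification \re{eq19}) with an ordinary corner embedding $B \to M_{n+1}(B)$, which is invertible in $\Lambda$.

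Next I would produce a left inverse for $e$ in $\Lambda$. Since $f \circ e$ is an invertible corner embedding, there is a morphism $r \in \mor(M_{n+1}(B), B)$ in $\Lambda$ with $(f\circ e)\, r = \id_B$; then $e \cdot (f r) = \id_B$, so $e$ has a right inverse $s := f r$ in $\Lambda$ (recall composition is written left to right, so this says $e$ followed by $s$ is the identity). It remains to show $s \cdot e = \id$ on $\calk_B(\cale \oplus B)$, i.e.\ that $e$ is also split \emph{epi}; equivalently that $s$ is a two-sided inverse. The standard device here is a rotation homotopy: inside $\calk_B\big((\cale \oplus B) \oplus (\cale \oplus B)\big)$, or rather inside $M_2 \otimes \calk_B(\cale \oplus B)$, one builds an explicit diffeotopy (built from $\ad$ of the rotation matrix $\left(\begin{smallmatrix}\cos t & \sin t \\ -\sin t & \cos t\end{smallmatrix}\right)$ as in the remark after Definition \ref{def28}) connecting $\diag(e s, 0)$ with $\diag(\id_{\calk_B(\cale\oplus B)}, 0)$, using that $\calk_B(\cale \oplus B)$ absorbs the extra summand $\cale \oplus B$ up to the corner embedding $f$. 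Combined with matrix stability (the corner embedding $\calk_B(\cale\oplus B) \to M_2 \otimes \calk_B(\cale\oplus B)$ is invertible in $\Lambda$), this forces $e s = \id$ in $\Lambda$, hence $e$ is invertible with inverse $s$.

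I expect the main obstacle to be the second half: verifying that the rotation homotopy argument actually applies in this generality, i.e.\ that $\calk_B(\cale \oplus B)$ together with the image of $f$ behaves enough like a matrix algebra for the ``Eilenberg swindle''-style rotation to go through at the level of $\Lambda$-morphisms rather than honest algebra homomorphisms. The delicate points are: (i) checking that the operators appearing in the homotopy really lie in the compact operators $\calk_B$ (finitely generated range) and not merely in adjointable operators; (ii) making sure the homotopy is \emph{smooth}, so that diffeotopy invariance of $\Lambda$ (Definition \ref{def28}) applies and gives $f_0 = f_1$; and (iii) keeping track of the left-to-right composition convention so that ``right inverse in $\Lambda$'' plus ``rotation homotopy'' genuinely yields a two-sided inverse. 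Once these are in place, the conclusion that $e$ is invertible in $\Lambda$ is immediate.
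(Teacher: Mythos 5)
The first half of your plan coincides with the paper's proof: form $F:=V\oplus\id_B$ by Lemma \ref{lem22}, apply Lemma \ref{lemma31} to get the injective homomorphism $f:\calk_B(\cale\oplus B)\to M_{n+1}(B)$ via \re{eq19}, observe that $ef=h$ is the ordinary (hence invertible) corner embedding, and take $s:=fh^{-1}$ as candidate inverse; the identity $e\cdot s=\id_B$ is then immediate, exactly as in parts (a) and (c) of the paper.

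The gap is in the other composite $s\cdot e$ (the endomorphism of $\calk_B(\cale\oplus B)$ in $\Lambda$; note your displayed ``$\diag(es,0)$'' conflates the two orders), and this is the crux of the proposition. Your device --- an explicit diffeotopy in $M_2\otimes\calk_B(\cale\oplus B)$ connecting $\diag(s e,0)$ with $\diag(\id,0)$ --- cannot be carried out as stated: $s e$ is only a $\Lambda$-morphism, built from the formal inverse $h^{-1}$, not an algebra homomorphism, whereas diffeotopies in Definition \ref{def28} are honest homomorphisms into $D[0,1]$. One must therefore rotate not $s e$ itself but honest homomorphisms occurring in a factorization of it, and producing that factorization needs a genuinely new ingredient your sketch does not supply: the paper applies Lemma \ref{lemma31} a second time, to the functional extension $\id_{\cale\oplus B}\oplus F:(\cale\oplus B)^2\to \C^2\otimes\C^{n+1}\otimes(\cale\oplus B)$, obtaining a homomorphism $\varphi:\calk_B\big((\cale\oplus B)^2\big)\to\calk_B\big((\cale\oplus B)^{2n+2}\big)$, and combines it with the amplification $E=\id_{M_{n+1}}\otimes e$ and the corner embeddings $H,x,z$ so that the identity $z\varphi=fEx$ holds between honest homomorphisms. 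Only then do rotation homotopies between honest corner embeddings ($z\sim z'$, $x\sim x'$ into opposite corners) together with $hE=eH$ yield
\begin{equation*}
e^{-1}e=fh^{-1}e=fEH^{-1}=z\varphi\cdot x^{-1}H^{-1}=z'\varphi\cdot (x')^{-1}H^{-1}=\id
\end{equation*}
in $\Lambda$. Your appeal to ``$\calk_B(\cale\oplus B)$ absorbs the extra summand $\cale\oplus B$ up to $f$'' is precisely the point where this second functional extension and the homomorphism $\varphi$ are required; as it stands the absorption is unproved and the argument does not close.
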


\if 0
\begin{lemma} 	  \label{lemma22}

\if 0
In category with only inverse ordinary corner emebeddings 
+ homotopy , the
following holds:
\fi 

If $f: \big ( \calk_B(\cale \oplus B), \gamma \big ) \rightarrow 
\big ( M_n \otimes B,  \delta  \otimes \beta \big )$ 
is any injective algebra homomorphism, 
then the corner emebddíng $e: (B,\beta) \rightarrow
\big ( \calk_B(\cale \oplus B), \gamma \big )$  is  invertible  in $\Lambda$,  
if  $ef$ is the canonical ordinary corner emebedding. 

\end{lemma}
\fi

\begin{proof}

{\bf (a)} 
By 
applying Lemma \ref{lem22}, 
we get a direct sum 
functional extension 
$$F:=  V \oplus \id_B : (\cale \oplus B,U \oplus \beta ) \rightarrow B^{n+1} = ( \C^{n+1} \otimes B , 
\tau \otimes \beta )$$  
where 
$\tau := \delta \oplus \triv_\C$. 
According to 
Lemma \ref{lemma31} 
and recalling $\big ( M_{n+1}(B) , \ad(\tau \otimes \beta)  \big )  \cong \calk_B \big ( (B^{n+1}, \tau \otimes \beta )  \big )$, let  $f$ be the injective algebra homomorphism 
associated to $F$ 
as entered in this commuting diagram,  
%
\if 0
By allpying lemma, and we may assume that we have given a 
functional extension 
$F: (\cale \oplus B,U) \rightarrow B^n = ( \C^n \otimes B , \delta \otimes \beta$ with $\delta$ restricted to the last coordinate $\C$ 
is the trivial $G$-action. 
\fi
$$\xymatrix{
\calk_B(\cale \oplus B)   \ar[r]^-f  
\ar[rrrd]^z		
& M_{n+1}(B)    \ar[r]^-E    
& M_{n+1} \big (\calk_B(\cale \oplus B) \big )    \ar[r]^-x   
&  
\calk_B  \big ( (\cale \oplus B)^{2n+2} 
\big )  
				\\
& B   \ar[lu]^e   \ar[r]^-e  \ar[u]^{h}     
& \calk_B(\cale \oplus B)     \ar[u]^{H} 
&  \calk_B \big ( (\cale \oplus B)^2  \big )    . 
   		\ar[u]^\varphi   
}$$

Recall that the domain algebra of $f$ has $G$-action $\Gamma :=  \ad(U \oplus \beta)$ 
and the range algebra $\ad(\tau) \otimes \beta$.    
Here, $E:= \id_{M_{n+1}}\otimes e$ is the canonical
homomorphism and its range algebra is equipped with 
the canonical $G$-action $\ad(\tau) \otimes \Gamma$.
Further, 
$H$ is the canonical corner embedding 
into the upper left corner.

Also $x$  
is the canonical  corner embedding into  
the lower right corner of the $M_{2n+2}$-matrix, 
which has $G$-action 
$\triv_{M_2} \otimes 
\ad(\tau) \otimes \Gamma$. 
\if 0
where the 
$G$-action of its source algebra is canonically amplified 
to its range algebra to $\id_{M_2} \otimes 
\tau \otimes \Gamma$.  
\fi 

{\bf (b)} 
According to Lemma  \ref{lem22},  
$\varphi$ 
is then set to be the injective algebra homomorphism 
associated to 
the direct sum functional extension composed with another 
functional extension,  
$$\id_{\cale  \oplus B } \oplus  F :  
\Big ( (\cale  \oplus B)^2 ,  (U \oplus \beta)^2 \Big )  \rightarrow 
\Big ( (\cale  \oplus B) \oplus B^{n+1}  , 
U \oplus \beta \oplus 
\tau \otimes \beta  \Big )$$
$$\subseteq 
\big ( \C^2 \otimes \C^{n+1} \otimes (\cale  \oplus B) ,  
\triv_{\C^2} \otimes  \tau \otimes  (U \oplus \beta)   \big ) ,$$ 

where in the last inclusion (and 
functional extension  by direct sum embedding), the first summand $(\cale \oplus B)$
goes here 
to the first summand of $(\cale  \oplus B)^{2n+2}$   
and the second summand $B^{n+1}$  
to the last $n+1$ $B$-summands of $(\cale  \oplus B)^{2n+2}$.  
(`First' means last coordinate in $(\C^{n+1} , 
\delta \oplus \triv_\C)$.)   
\if 0
 where into the last inclusion, $\id_{\cale \oplus B}$ goes here actually
to the first summand of $(\cale  \oplus B)^{2n+2}$ and 
$V$ to the last $n+1$ $B$-summands. 
\fi

Finally, $z$ in the diagram is the corner embedding into the 
lower right corner, and its range algebra is equipped with the $G$-action $\ad ( U \oplus \beta \oplus U \oplus \beta)$. 
Now note that $z \varphi = f E x$ in the diagram above.

{\bf (c)}		
We propose that in $\Lambda$  the inverse  of $e$  
is 
$e^{-1} := f h^{-1}$,
where 
$h: (B,\beta) \rightarrow (M_{n+1}(B) , 
\ad(\tau) \otimes \beta)$ 
is the canonical corner embedding in the above diagram.  

Then we get $e e^{-1} = e f h^{-1}  =  h h^{-1} = \id$ 
in $\Lambda$ 
as required. 

{\bf (d)}		
We are going to show that $e^{-1} e = \id$. 
Let 
$z'$ - as opposed to $z$ - be the corner embedding into the 
upper left corner in the above diagram, and similarly 
so for $x'$ as 
opposed  to $x$. 
By 
ordinary rotation homotopies, 
$z=z'$ and $x=x'$ in $\Lambda$. 
Note that 
$z' \varphi$ is the canonical corner embedding 
into the upper left corner. 
Observe that $h E = e H$.  
Thus in $\Lambda$ we get 
$$
e^{-1} e= f h^{-1}  e = f E H^{-1} 
=f E \cdot x 
x^{-1} \cdot H^{-1} 
= z \varphi \cdot  x^{-1} \cdot H^{-1} 
= 
z' \varphi \cdot  {(x')}^{-1} \cdot H^{-1} 
= \id . 
$$
 
\if 0
\if 0
To this end consider the commuting diagram

	A
Finally, $\varphi := \id_{M_{2n}} \otimes F$, where the bijective 
homomorphism  $F$ is just the map 
$f$ but with range algebra the 
image of $f$. 
 \fi 
In $\Lambda$ we compute 
\begin{equation}  \label{eq1}
e^{-1} e= f h^{-1}  e = f E H^{-1} 
=f E \cdot x 
x^{-1} \cdot H^{-1} 
\end{equation} 

Since $f \circ e$ is the ordinary corner embedding, $f$ 
maps the corner algebra $B$ to $B$ unchanged.  
Thta is why $E x$ is the map 
which 
distributes the matrix $M_n(B)$ into the copies of corner
algebras $B$ in the range of $x$.

Now note that $f E \cdot x \varphi$ 
is the map $f y$, 
$$ y \circ f  : 
K_B(\cale \oplus B ) \rightarrow^f  M_n(B) 
\rightarrow^y M_2n( f(\calk_B(\cale \oplus B))) 
\rightarrow^z M_{^2n} (M_n(B))$$
where $y$ distributes the matrix $M_n(B)$ into the copies of corner
algebras $B$ in the range of $y$. 
Here $z= \id_{M_{2n}} \otimes \iota$, 
with $\iota$ the identity embedding.
\if 0
By an ordinary rotation homtopy in $M_{2n^2}(B)$ we now rotate 
the image algebra of $f yz$, which is $f(\calk_B(\cale \oplus B)$,
to a free uninvolved canonical copy of $f(\calk_B(\cale \oplus B)$ 
of the range of $y$, regarded in the range of $z$. 
 \fi 
By an ordinary rotation homtopy in $M_{2n^2}(B)$ we now rotate 
the subalgebra $M_n(B)$ which contains the image algebra of $f yz$, 
to 
an orhtogonal subalgebra $M_n(B)$ which contains 
a free uninvolved canonical copy of $f(\calk_B(\cale \oplus B)$ 
of the range of $y$, regarded in the range of $z$.

%
mappinf into the corner $M_n(B)$ via $f$. 

The range of $0 \oplus f$ is in the smaller $G$-invariant subalgebra 
$K_B(\cale \oplus B^n \oplus \cale \oplus B^n ) \cong M_2( \calk_B(\cale \oplus B) ) =:X$, and can thus rotate 
$0 \oplus f$ to the
other corner algebra  $\calk_B(\cale \oplus B)$ 
by an ordinary rotation homotopy in $X$,   
so that $f E  x \varphi \cdot \varphi^{-1}$ 
becomes homotopic to the ordiary corner emebdding $H x$. 
 
Hence \re{eq1} is homotopic to the identity map.  
\fi 
\end{proof}


\section{Change of coefficient algebra}		\label{sec3}  

In this section we show the important fact that 
functional extensions  of the form 
$U: 
\cale \rightarrow A^n$
are preserved under a change of  the 
coefficient algebra $A$. 
However, for the proof we need 
the coefficient algebra $A$ to have an approximate unit.  
That is why from now on,  we also need always to check 
that the considered algebras of compact operators have
an approximate unit as they 
may - being algebras - be coefficient algebras 
for themselves. 

Recall that all algebras and modules are supposed to be discrete. 
An {\em approximate  unit} of an algebra $A$ is thus a net 
$(x_i)_{i \in I}$ in $A$ such that for all $a \in A$
there is an $j \in I$ such that $x_i a = a x_i = a$ for all
$i \ge j$. 

Even if the discrete topology seems here somewhat artificial, 
we expect 
that the concepts here carry later   
over 
to more interesting topologies, to be considered elsewhere.

\begin{lemma}
	\label{lemma41}

Let $(A,\alpha)$ be an algebra 
having 
a right approximate unit. 
If $U: \cale \rightarrow A^n = (A  \otimes \C^n,  \alpha \otimes \mu)$ 
is a functional extension  
($n$ any cardinality) 
and  
$\pi:(A,\alpha) \rightarrow (B,\beta)$ an algebra  homomorphism,
then there is a functional extension  
$V: \cale \otimes_\pi B \rightarrow B^n 
= (B  \otimes \C^n,  \beta \otimes \mu)$.     
\end{lemma}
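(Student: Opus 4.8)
The plan is to build $V$ from $U$ by tensoring with $B$ over $\pi$ and then to exploit the approximate unit of $A$ to verify the functional extension property. First I would set $V := U \otimes_\pi \id_B : \cale \otimes_\pi B \to A^n \otimes_\pi B$, and observe that the canonical map $A^n \otimes_\pi B \to B^n$ sending $(a \otimes e_j) \otimes b \mapsto \pi(a) b \otimes e_j$ is a functional module homomorphism which is in fact \emph{surjective} (every $b' \otimes e_j$ is hit, since $\pi(x_i) b' \to b'$ is eventually $b'$-stable by the approximate unit); hence by the remark following Definition \ref{def32} it is a functional extension, and composing (Lemma \ref{lem22}) it suffices to produce a functional extension $\cale \otimes_\pi B \to A^n \otimes_\pi B$. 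Actually it is cleaner to directly define $V : \cale \otimes_\pi B \to B^n$ on elementary tensors by $V(\xi \otimes b) = \sum_j \pi\big(\phi_j^U(\xi)\big) b \otimes e_j$, where I write $U(\xi) = \sum_j \phi_j^U(\xi) \otimes e_j$ with $\phi_j^U \in \Theta_A(\cale)$ the coordinate functionals of $U$ (this uses that $U$ lands in $A^n$), and to define $V^* : \Theta_B(B^n) \to \Theta_B(\cale \otimes_\pi B)$ by transporting $U^*$ coordinatewise and tensoring with $B$; one checks $V$ is well-defined on the quotient $\cale \otimes_\pi B$ because $U$ is an $A$-module map and the relations $\xi a \otimes b - \xi \otimes \pi(a)b$ are killed.

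Next I would verify that $(V, V^*)$ is a homomorphism of functional modules, i.e. $V^*(\psi)(V(\zeta)) = \psi(\zeta)$ for $\psi \in \Theta_B(B^n)$ and $\zeta \in \cale \otimes_\pi B$: on elementary tensors this reduces, coordinate by coordinate, to the identity $U^*(\phi)(U(\xi)) = \phi(\xi)$ for $U$ pushed through $\pi$, together with bilinearity over $B$. Injectivity of $V$ follows either because $\Theta_B(\cale \otimes_\pi B)$ separates points (it contains all $\phi \otimes c$ with $\phi \in \Theta_A(\cale)$, $c \in B$) or is automatic once the extension property is in hand.

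The main obstacle — and the step where the approximate unit of $A$ is genuinely used — is the functional extension property itself: given finitely many functionals $\psi_1, \dots, \psi_m \in \Theta_B(\cale \otimes_\pi B)$ and an $\eta \in B^n$, I must find $\zeta \in \cale \otimes_\pi B$ with $V^*(\psi_i)(\eta) = \psi_i(\zeta)$. Since $\Theta_B(\cale \otimes_\pi B)$ is spanned by elementary functionals $\phi \otimes c$, I may assume each $\psi_i = \phi_i \otimes c_i$ with $\phi_i \in \Theta_A(\cale)$; and $\eta \in B^n$ involves only finitely many nonzero coordinates, each a finite sum of elements of $B$. Here is the point: the functional extension property of $U : \cale \to A^n$ gives, for the finitely many $\phi_i$ and any prescribed target values in $A$, an element $\xi \in \cale$ realizing them, but I need the target values in $B$ realized. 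I would first choose, using that $A$ has a right approximate unit $(x_k)$, an index $k$ so that $x_k$ acts as a right unit on all the finitely many relevant elements of $A$ (the $\phi_i(\xi)$'s that will arise); then lift $\eta$ through $\pi \otimes \id$ componentwise to an element $\tilde\eta \in A^n$, apply the functional extension property of $U$ to get $\xi \in \cale$ with $U^*(\phi_i)(\tilde\eta) = \phi_i(\xi)$, and finally set $\zeta := \xi \otimes b_0$ for a suitable $b_0 \in B$ assembled from the $c_i$ and the approximate-unit data. The bookkeeping — matching the coordinates of $\eta$, absorbing the scalars $c_i$, and using $\pi(x_k a) = \pi(x_k)\pi(a)$ with $\pi(x_k)$ eventually a right unit on $\pi(a)$ — is the routine-but-delicate part; the conceptual content is exactly that surjectivity-up-to-approximate-unit converts an $A$-valued extension statement into a $B$-valued one. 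I would close by remarking that $V^*$'s additivity and $B$-linearity are then automatic, as recorded after Definition \ref{def32}.
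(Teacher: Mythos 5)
Your definition of $V$ agrees with the paper's ($V(\xi\otimes b)=\tilde\pi(U(\xi))b$ with $\tilde\pi$ the coordinatewise application of $\pi$), but the step on which your whole verification of the functional extension property rests is false: you cannot ``lift $\eta$ through $\pi\otimes\id$ componentwise to an element $\tilde\eta\in A^n$'', and the related claim that the canonical map $A^n\otimes_\pi B\to B^n$ is surjective does not hold. The homomorphism $\pi:A\to B$ is arbitrary, in particular not surjective; a right approximate unit $(x_k)$ of $A$ only gives $\pi(x_k)\pi(a)=\pi(a)$ eventually for $a\in A$, i.e.\ $\pi(x_k)$ acts as a unit on elements of $\pi(A)B$, not on an arbitrary $b'\in B$, so ``$\pi(x_k)b'$ is eventually $b'$'' is unjustified and a coordinate $\eta_i\in B$ need not lie in the image of $\pi$ (nor even in $\pi(A)B$). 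Your final ansatz $\zeta=\xi\otimes b_0$ with a single elementary tensor is also too rigid: the scalars $c_i$ belong to different functionals $\phi_i\otimes c_i$ and cannot be absorbed into one $b_0$, and different coordinates of $\eta$ require different elements of $\cale$.

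The paper's proof gets around exactly this obstruction without any lifting. Writing $\Phi_A(U^*(\phi_k))=\bigoplus_i x_{k,i}\in A^n$ for the coefficient vector of the extended functional, one evaluates $U^*(\phi_k)$ on the approximate-unit vectors $\nu_\lambda^{(i)}\in A^n$ (where the right approximate unit is genuinely used, to get $x_{k,i}\nu_\lambda=x_{k,i}$), and then the functional extension property of $U$ \emph{inside} $A^n$ produces $\xi_i\in\cale$ with $\phi_k(\xi_i)=x_{k,i}$ simultaneously for all $k$. The element answering the extension problem for $\eta\in B^n$ is then $\zeta:=\sum_i\xi_i\otimes\eta_i$: the $B$-coordinates of $\eta$ are placed in the second tensor factor, where only their left multiples by $\pi(x_{k,i})=\pi(\phi_k(\xi_i))$ ever matter, so no preimage under $\pi$ is needed. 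The same identity $V^*(x)(y)=x\bigl(\sum_i\xi_i\otimes y_i\bigr)$ also settles the well-definedness of $V^*$ on sums of elementary functionals, a point your proposal does not address (and note that in Definition \ref{def31} the map $V^*$ goes from $\Theta_B(\cale\otimes_\pi B)$ to $\Theta_B(B^n)$, not the pullback direction you first wrote, so its well-definedness is not automatic). As it stands, your argument fails for any non-surjective $\pi$ and needs the paper's coefficient-extraction device, not a lift of $\eta$.
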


\begin{proof}

{\bf (a)} 
At first we define $(V,V^*)$. 
Define 
a $G$-equivariant 
algebra homomorphism 
\be{eq31} 
\tilde \pi:A^n 
=(A \otimes \C^n,
\alpha \otimes \mu) \rightarrow B^n 
= (B \otimes \C^n, \beta \otimes \mu)  
\en 
by $\tilde \pi(
a_1 \oplus \cdots \oplus a_n) := \pi(a_1) \oplus \cdots \oplus \pi(a_n)$. 
Throughout, $A^n$ and $B^n$ are equipped with the 
in \re{eq31} 
indicated $G$-actions. 
Set $V$ to be the right $B$-module homomorphism  (where here $\xi \in \cale  , b \in B$)
%
$$V: \cale \otimes_\pi B \rightarrow  B^n :V(\xi \otimes b):=   \tilde \pi \big (U(\xi) \big ) b  .$$

We use the $G$-equivariant left $A$-module isomorphism $\Phi_A:\Theta_A(A^n) \rightarrow A^n$ 
defined by   
$\Phi_A^{-1} (x)(a) := x \cdot a := \sum_{i=1}^n x_i a_i$ 
for all $a=(a_i), x = (x_i) \in A^n$. 

\if 0
We use the $G$-equivariant isomorphism $\Phi_A:\Theta_A(A^n) \rightarrow A^n$,   
$\Phi_A(\psi) :=x$, 
by writing every $\psi \in  \Theta_A(A^n)$ as 
$\psi(a) = x \cdot a := \sum_{i=1}^n x_i a_i$ 
for $a=(a_i), x = (x_i) \in A^n$. 
\fi

Define 
the left $B$-module homomorphism 
(where here $\phi \in \Theta_A(\cale), c \in (B,\beta)
\cong \Theta_B(B), y \in B^n$, recall Definition \ref{def25}) 
by 
 %
 %
%
%
%
%
%
$$V^*( {\phi \otimes c} ):   
\Theta_B ( \cale \otimes_\pi B )   \rightarrow \Theta_B(B^n) :
V^* \big ({\phi \otimes c} \big ) ( y) :=   
c 
\Big (
\tilde \pi \big  (  \Phi_A \big ( U^*(\phi) \big ) \big )  \cdot y   \Big ) .$$



{\bf (b)} 
To check  that $(V,V^*)$ 
is a functional module homomorphism 
we compute 
 $$V^* \big ({\phi \otimes c} \big ) \big  (V(\xi \otimes b) \big )  
= c 
\Big (
\tilde \pi \big  (  \Phi_A \big ( U^*(\phi) \big ) \big )  \cdot 
\tilde \pi \big (U(\xi) \big ) b  
   \Big ) 
$$
$$= c  \pi \Big  (  \Phi_A \big ( U^*(\phi) \big )   \cdot  U(\xi) \Big ) b 
=  c  \pi \Big  (  U^*(\phi)  \big (U(\xi) \big ) \Big ) b  
= c  \pi \Big  (  \phi(\xi)  \Big ) b 
 =  ({\phi \otimes c}) ( \xi \otimes b )   . 
$$

{\bf (c)} 
We verify that $V^*$ is well-defined. 
Suppose that we are given a functional $x := \sum_{k=1}^m  ({\phi_k \otimes c_k}) \in \Theta_B(\cale \otimes_\pi B)$.   
That is, for all $\xi \in \cale ,b \in B$, 
we have  
\be{eq10}
\sum_{k=1}^m  \big ({\phi_k \otimes c_k} \big ) ( \xi \otimes b )
 =  \sum_{k=1}^m  c_k \pi \big ( \phi_k  (\xi) \big )  b   .  
\en

Write 
$\bigoplus_{i=1}^n  x_{k,i}  := \Phi_A \big (U^*(\phi_k) \big ) \in A^n$ for short 
($x_{k,i} \in A$), 
for all $1 \le k \le m$. 
 
Then by definition of $V^*$ we get 
(where here $y \in B^n$)   
\be{eq11}
V^*  \Big (\sum_{k=1}^m {\phi_k \otimes c_k} \Big ) ( y)  
=   \sum_{k=1}^m   \sum_{i=1}^n c_k \pi(x_{k,i}) y_i    . 
\en 

Let $(\nu_\lambda)_{\lambda \in J}$ be 
a right approximate unit of $A$. 

Note that $U^* \big (\phi_k \big ) \big ( 
\nu_\lambda^{(i)} \big )  = x_{k,i} \nu_\lambda$,
where $\nu_\lambda	^{(i)} \in A^n$ means the vector which has $\nu_\lambda	$ 
at coordinate $i$ and is otherwise zero. 
Note that almost all $x_{k,i}$ are zero 
for 
$1 \le k \le m$ and $1 \le i \le n$. 

Let us fix a $\lambda \in J$ so big, that $ x_{k,i} \nu_\lambda
= x_{k,i}$ for all $k,i$. 

By the functional extension property for $U$,
we can choose for each $1 \le i \le n$ an $\xi_i \in \cale$ 
(almost all of which are zero) such that 
\be{eq77}
x_{k,i} = 
x_{k,i} \nu_\lambda  
=  U^*  \big (\phi_k  \big )  \big ( 
\nu_\lambda^{(i)}   \big )
= \phi_k(\xi_{i})   . 
\en 

Comparing lines \re{eq10} and \re{eq11} 
yields, for all $y \in B^n$,  
\be{eq15}
V^*(x)(y)
=
V^* \Big ( \sum_{k=1}^m {\phi_k \otimes c_k} \Big)  (y)  
=  \sum_{k=1}^m 
\big ({\phi_k \otimes c_k} \big )  \Big ( \sum_{i=1}^n \xi_i  \otimes y_i 
\Big ) 
= x \big (\zeta(y)   \big )  
\en 
for $\zeta (y)  \in \cale \otimes_\pi B$ obviously defined. 
Hence $x=0$ implies $V^*(x) = 0$. 


{\bf (d)} 
The functional extension property of $(V,V^*)$ is verbatim proven 
as item (c) because it works also for simultaneously 
given functionals $x_1,\ldots,x_N$ rather than just one $x$.  
Thereby note that the $\xi_i$s of $\zeta 
(y)$ are selected such that identity 
\re{eq77} holds simultaneously for all $\phi_k$s,  
and thus \re{eq15} simultaneously for all $x_k$s 
with 
one $\zeta(y)$.  
That is, by identity \re{eq15}, the requirement 
of Definition \ref{def32} is solved with 
$\zeta(y)$.  
%
%
%
%
  %
\end{proof}


Approximate units in compact operator algebras 
are preserved under a change of the underlying coefficient algebra:


\begin{lemma}

\label{lemma42}

Assume that $\cale$ is 
a functional $A$-module. 
If $\calk_A(\cale)$ has an 
(left, right or 
two-sided) approximate unit and 
$\pi: A \rightarrow B$ is an algebra  homomorphism
then $\calk_B(\cale \otimes_\pi B)$
has an (left, right and 
two-sided, respectively) 
approximate unit. 

Additionally, 
in case of a left approximate unit we also need to assume 
that $\cale$ is cofull, and in the right case that
$\Theta_A(\cale)$ 
is cofull 
and $A$ has a left approximate unit. 

\end{lemma}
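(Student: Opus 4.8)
The strategy is to transport an approximate unit from $\calk_A(\cale)$ to $\calk_B(\cale\otimes_\pi B)$ by applying, pointwise, the natural map that sends an elementary compact operator $\theta_{\xi,\phi}$ on $\cale$ to the elementary compact operator $\theta_{\xi\otimes 1,\phi\otimes 1}$ on $\cale\otimes_\pi B$ — except that there may be no unit $1$ in $B$, so this has to be done with a little care using elements of $B$ and the approximate unit of $A$. First I would record the elementary identity
$$\theta_{\xi\otimes b,\phi\otimes c}\circ\theta_{\xi'\otimes b',\phi'\otimes c'}=\theta_{\xi\otimes b,\, (c\,\pi(\phi(\xi'))\,b')\cdot(\phi\otimes c')}$$
showing how composition of elementary compact operators on $\cale\otimes_\pi B$ reduces to the coefficient algebra; and similarly the action of such an operator on a general simple tensor $\eta\otimes d$. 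The point is that the image of $\calk_A(\cale)$ under $\theta_{\xi,\phi}\mapsto\theta_{\xi\otimes c,\phi\otimes b}$ (for fixed $b,c\in B$) lands inside $\calk_B(\cale\otimes_\pi B)$, and the cofullness/approximate-unit hypotheses are exactly what is needed to hit an arbitrary generator $\theta_{\eta\otimes d,\psi\otimes d'}$.

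\textbf{The two-sided and right cases.} Suppose $(u_\lambda)$ is a (right) approximate unit of $\calk_A(\cale)$; each $u_\lambda$ is a finite sum $\sum_j\theta_{\xi_j,\phi_j}$. I would form $w_\lambda:=\sum_j\theta_{\xi_j\otimes 1,\,\phi_j\otimes 1}$ in the two-sided case where $B$ is unital, and otherwise index also over an approximate unit of $B$ (and, in the right case, a left approximate unit of $A$) to manufacture the missing scalars. Given a generator $T=\theta_{\eta\otimes d,\,\psi\otimes d'}$ with $\psi=\rho\otimes c\in\Theta_B(\cale\otimes_\pi B)$, cofullness of $\Theta_A(\cale)$ lets me write $\rho$ (up to the needed approximation) as a finite sum $\sum_i\rho_i(\zeta_i)\tau_i$ with $\rho_i,\tau_i\in\Theta_A(\cale)$, $\zeta_i\in\cale$; then $T\cdot w_\lambda$ (right case) differs from $T$ by terms involving $u_\lambda\theta_{\zeta_i,\tau_i}-\theta_{\zeta_i,\tau_i}$, which vanish for $\lambda$ large because $(u_\lambda)$ is a right approximate unit for $\calk_A(\cale)$. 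The left approximate unit of $A$ is used to absorb a stray coefficient $c\in B$ coming from $\pi$ applied to an element of $A$. The two-sided case is the left case combined with the right case.

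\textbf{The left case.} Here $\calk_A(\cale)$ has a left approximate unit and $\cale$ is cofull, so an arbitrary element of $\cale\otimes_\pi B$ is (approximately) of the form $\sum(\xi_i\phi_i(a_i))\otimes b_i$; writing a generator $\theta_{\eta\otimes d,\psi\otimes d'}$ and expanding $\eta$ via cofullness of $\cale$, I compute $w_\lambda\cdot\theta_{\eta\otimes d,\psi\otimes d'}$ and see it converges to the generator, the discrepancy being controlled by $u_\lambda\theta_{\xi_i,\phi_i}-\theta_{\xi_i,\phi_i}\to 0$. In each of the three cases one finally checks that the $w_\lambda$ form a net with the defining approximate-unit property on all finite sums of generators, which follows since the estimates are uniform in $\lambda$ once $\lambda$ dominates the finitely many indices involved.

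\textbf{Main obstacle.} The genuinely delicate point is the absence of a unit in $B$: the naive candidate $\sum_j\theta_{\xi_j\otimes 1,\phi_j\otimes 1}$ does not exist, so the net has to be built as a \emph{double} net over the approximate units of $\calk_A(\cale)$ and of $B$ (and sometimes of $A$ on the left), and one must verify that after composing with a fixed generator all the auxiliary approximate-unit factors can be made to act as identities simultaneously — i.e. that the order of quantifiers works out. Keeping track of which side each approximate unit acts on, and checking that cofullness supplies exactly the decomposition needed to reduce a general generator to the elementary operators that the approximate unit of $\calk_A(\cale)$ already handles, is where the real work lies; the algebra of the $\theta$-calculus itself is routine.
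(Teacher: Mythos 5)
Your overall strategy --- transport the (discrete, hence locally exact) approximate unit of $\calk_A(\cale)$ by tensoring the legs of its elementary operators, and use cofullness to reduce arbitrary generators of $\calk_B(\cale \otimes_\pi B)$ to ones that this unit handles --- is the same as the paper's, and you correctly locate the main obstacle in the missing unit of $B$. But your resolution of that obstacle has a genuine gap. First, $B$ is not assumed to have an approximate unit in this lemma (only $\calk_A(\cale)$, and in the right case $A$, carry one; an approximate unit for $B$ only enters later, in Theorem \ref{thm71}), so the double net over an approximate unit of $B$ is simply not available. Second, even if such a net $(v_\kappa)$ in $B$ existed, your candidate $\sum_j \theta_{\xi_j \otimes v_\kappa, \phi_j \otimes v_\kappa}$ composed with a generator $\theta_{\eta \otimes d, \rho \otimes c}$ and evaluated at an arbitrary $\zeta \otimes e$ produces factors $v_\kappa\, \pi\big(\phi_j(\zeta)\big)$ with $\zeta$ ranging over \emph{all} of $\cale$; a discrete approximate unit is eventually a unit only on finitely many prescribed elements, so it can never absorb the infinite family $\pi\big(\phi_j(\cale)\big)$. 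The ``order of quantifiers'' issue you flag at the end is therefore not merely delicate for this choice of net --- it is fatal.

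The paper's proof resolves it with a different device: the missing scalar is $\pi(x)$ for a \emph{single} element $x \in A$. One first builds a local unit $z = \sum_i \theta_{\lambda_i,\mu_i} \in \calk_A(\cale)$ which is an exact unit for the finitely many elementary operators arising from cofull decompositions of the given generators (cofullness of $\Theta_A(\cale)$ for functionals $\eta_j = \sum_k \phi_{j,k}(\xi_{j,k})\tau_{j,k}$, cofullness of $\cale$ for vectors $w_i\psi_i(v_i)$), giving the exact identities $\eta_j\big(z(\zeta)\big) = \eta_j(\zeta)$ and $z\big(w_i\psi_i(v_i)\big) = w_i\psi_i(v_i)$. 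Then cofullness of $\Theta_A(\cale)$ together with the left approximate unit of $A$ yields $x \in A$ with $x\mu_i = \mu_i$ \emph{as functionals}, i.e.\ absorption uniform in the argument $\zeta$ --- precisely what your $v_\kappa$ cannot achieve. The candidate unit is $\sum_i \theta_{\lambda_i \otimes \pi(x),\, \mu_i \otimes \pi(x)}$, and the balanced relation $\xi a \otimes b = \xi \otimes \pi(a) b$ moves the $\pi(x)$'s back into $\cale$, where they vanish against the $\mu_i$, reducing the computation to the two identities above. Note also that in this discrete setting cofullness gives \emph{exact} finite-sum decompositions, not decompositions ``up to approximation'', and nothing converges: your language of estimates has no counterpart here, and the argument must (and in the paper does) terminate in exact equalities after a sufficiently large index.
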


\begin{proof} 

{\bf (a)} 
We discuss at first the case of $\calk_A(\cale)$ having a two-sided approximate 
unit. 

First, 
let a finite collection of functionals 
$\eta_1, \ldots, \eta_n \in \Theta_A(\cale)$ 
be given. 

By cofullness of the functional space  $\Theta_A(\cale)$, 
choose $\xi_{j,k} \in \cale$ and $\phi_{j,k}, \tau_{j,k} \in \Theta_A(\cale)$ such that 
$\eta_j = \sum_{k=1}^{N_j} \phi_{j,k}(\xi_{j,k}) \tau_{j,k}$ for all $1 \le j \le n$.


Second, 
let a finite collection $w_1 \psi_1(v_1), \ldots, w_m \psi_m(v_m) \in \cale$ 
be given, where $w_i, v_i \in \cale$ and $\psi_i \in \Theta_A(\cale)$.


Because $\calk_A(\cale)$ has a two-sided unit, what we assume 
for 
the moment, there is an element  
$z:=\sum_{i=1}^{N} \theta_{\lambda_i,\mu_i} \in \calk_A(\cale)$ 
($\lambda_i \in \cale, \mu_i \in \Theta_A(\cale)$)  
which is simultaneously a  
unit for 
the finite collection of 
all elements  
$\Theta_{\xi_{j,k} , \tau_{j,k} }  \in \calk_A(\cale)$ 
and $\Theta_{w_i , \psi_i } \in \calk_A(\cale)$. 

Thus 
for all $\zeta \in \cale$ 
and $1 \le j \le n$  and $1 \le i \le m$ we get 
\be{eq17}
\eta_j(\zeta)  
= \sum_{k=1}^{N_j} \phi_{j,k}(\xi_{j,k}) \tau_{j,k} (\zeta) 
= \sum_{k=1}^{N_j} \phi_{j,k} \circ 
 \Theta_{\xi_{j,k} , \tau_{j,k} } \circ z (\zeta) 
= \eta_j \big ( z (\zeta) \big )  , 
\en 
%
%
\be{eq18} 
w_i \psi_i(v_i) = \theta_{w_i, \psi_i}(v_i) = 
z \circ \theta_{w_i, \psi_i}(v_i)  = z \big ( w_i \psi_i(v_i) \big )  . 
\en 

If $\calk_A(\cale)$ has only a right approximate unit then only 
\re{eq17} 
holds true, and if only a left approximate unit 
only \re{eq18}. 

{\bf (b)}  
We focus on a right approximate 
unit of $\calk_B(\cale \otimes_\pi B)$ 
and thus suppose \re{eq17} holds true. 
%
%
Because by cofullness of $\Theta_A(\cale)$,   
every $\rho \in \Theta_A(\cale)$ can be presented 
as a finite sum $\sum_i \phi_i(\xi_i) \rho'_i$ and thus 
there is an $x \in A$ such that $x \rho = \rho$ if $A$ 
has a left approximate unit. 
 
Using this, 
choose an $x \in A$ 
which is a left unit for all $\mu_1, \ldots,
\mu_N$ defined in $z$. 

Let any $\xi \in \cale, b,c \in B$ be given. 
Then for all $\zeta \in\cale, d \in B$ 
we get 
$$
\Theta_{\xi \otimes b, \eta_j \otimes c} 
\cdot 
\sum_{i=1}^N \Theta_{\lambda_i \otimes \pi(x),\mu_i \otimes \pi(x) }
(\zeta \otimes d)$$
$$= \sum \xi \otimes b \cdot \big  (\eta_j \otimes c \big )
 \big (\lambda_i \otimes \pi(x) \big ) \cdot 
\big  (\mu_i \otimes \pi(x) \big ) 
(\zeta \otimes d)$$
$$= \sum \xi \otimes b \cdot 
c \cdot  \pi \big (\eta_j(\lambda_i) \big )  \cdot \pi(x) \cdot 
 \pi(x) \cdot \pi \big (\mu_i (\zeta) \big ) \cdot d $$
$$  
= \sum \xi \otimes b \cdot 
c \cdot  \pi  \big (\eta_j \big (\lambda_i\cdot x \cdot x  \cdot \mu_i (\zeta) 
\big  )
\big ) \cdot d $$
$$ = \xi \otimes b \cdot 
c \cdot  \pi \big (\eta_j \big (z(\zeta) \big ) \big ) \cdot d $$
$$= \Theta_{\xi \otimes b, \eta_j \otimes c}(\zeta \otimes d) .$$

\if 0
wähle $\lambda,\mu$ so, sodass 
$\eta_2(\Theta_{\lambda , \mu}(\zeta)) = \eta_2(\zeta)$
für alle $\zeta$

(nach was von oben gesagt)

wähle $x$ so, sodass $\lambda \cdot x x = \lambda$

damit ist obiges weitergleich

$$ =  \xi_2 \otimes b_2 \cdot 
c_2 \cdot  \pi(\eta_2(\zeta)) \cdot c $$
$$= \Theta_{\xi_2 \otimes b_2, \eta_2 \otimes c_2}(\zeta \otimes c)$$

das funktioniert auch simultan für mehrere $\eta_2$
\fi 

{\bf (c)} 
We consider a left approximate 
unit in $\calk_B(\cale \otimes_\pi B)$ 
and thus 
assume \re{eq18} holds true.   
Let any $\eta \in \Theta_A(\cale) , b,c \in B$ be given. 
Then for all $\zeta \in\cale, d \in B$ we get 
$$ \sum_{i=1}^N 
\Theta_{\lambda_i \otimes \pi(x),\mu_i \otimes \pi(x) } 
\cdot 
\Theta_{w_j \psi_j(v_j) \otimes b, \eta \otimes c} 
(\zeta \otimes d)$$
$$=  \sum \lambda_i \otimes \pi(x) \cdot  \big (\mu_i \otimes \pi(x) 
\big )
 \big ( w_j \psi_j(v_j)  \otimes b \big ) \cdot 
\big (\eta \otimes c  \big ) 
(\zeta \otimes d)$$
$$= \sum  \lambda_i \otimes \pi(x) \cdot  \pi(x)
  \cdot \pi \big (\mu_i \big (   w_j \psi_j(v_j) \big ) \big )  \cdot b \cdot 
 c \cdot \pi  \big (\eta (\zeta) \big )   
  \cdot  d$$
$$= \sum  \lambda_i  
\cdot x \cdot  x
  \cdot \mu_i \big (     w_j \psi_j(v_j)  \big  ) \otimes   
b \cdot 
 c \cdot \pi \big (\eta(\zeta) \big )   
  \cdot  d$$
$$=   z \big ( w_j \psi_j(v_j)   \big )  
\otimes b \cdot 
 c \cdot \pi \big ( \eta(\zeta)  \big )   
  \cdot  d$$
$$  = \Theta_{w_j \psi_j(v_j) \otimes b, \eta \otimes c} 
(\zeta \otimes d)  .$$

\if 0
wähle weider $\lambda,\mu,x$ so, sodass 
$\theta_{\lambda \cdot x x , \mu} ( \xi_2) = \xi_2$

(bei cofullness, $\xi_2 = \theta_{\xi,\eta}(v)$) 

und die vorrausetzungen für rechtsapprox einheit 
erfüllt

obiges ist dann
$$\Theta_{\xi_2 \otimes b_2, \eta_2 \otimes c_2} 
(\zeta \otimes c)$$
\fi

{\bf (d)} 
Discussions of items (b) and (c) show that 
at least 
the elements 
$$\Theta_{w_i \psi_i(v_i) \otimes b, \eta_j \otimes c}  
\in \calk_B(\cale \otimes_\pi B)$$
have simultaneously  an obvious 
two-sided unit, provided $\calk_B(\cale \otimes_\pi B)$
has a two-sided  approximate unit and thus \re{eq17} 
and\re{eq18} hold simultaneously true.  

Since by cofullness of $\cale$ 
arbitrarily finitely many elements 
in $\calk_B(\cale \otimes_\pi B)$ can be written as finite 
sums of such elements, we see that they also have simultaneously a
two-sided unit. 
It is then obvious how to construct an approximate unit in the algebra
$\calk_B(\cale \otimes_\pi B)$ from this fact. 

Finally, if $\calk_B(\cale )$ has only a right 
or left approximate unit,  
then  
we employ only items (b) and (c), respectively. 
%
%
%
%
%
%
\end{proof}

\if 0
\section{}

If we have an injective homomoprhisms 
$f : 
\calk_B(\cale) \rightarrow M_n(B)$
and $g: \calk_{\calk_B(\cale)}(\calf) \rightarrow M_m(\calk_B(\cale))$
then we get a composed homomoprhism 
$(f \otimes \id_{M_n}) \circ g: 
\calk_{\calk_B(\cale)}(\calf) \rightarrow M_{m n}(B)$. 
In other words, if $\cale$ and $\calf$ are pseudoprojective 
as described, so is $\calf$ over $B$ in a certain sense.

$$\xymatrix{  B \ar[r]^-e  &  
\calk_B(\cale \oplus B)   \ar[r]^-f  \ar[d]^x  &
\calk_{\calk_B(\cale \oplus B)} \big(\calf \oplus  \calk_B(\cale \oplus B) \big ) 
\ar[d]^y  \\
& M_n(B)    
& M_m(\calk_B(\cale \oplus B)) & M_{nm} (B)
} $$
\fi

\section{Composition of corner embeddings}
\label{sec4}

It is 
obvious that the composition of two invertible 
corner embeddings is invertible again. 
But in 
\cite[Corollary 8.2.]{gk}, relying on \cite[Proposition 8.1]{gk}, it is also proven that the composition 
$f \circ e$ of two invertible corner embeddings $e: A \rightarrow \calk_A(\cale \oplus A)$ 
and $f:  \calk_A(\cale \oplus A) \rightarrow \calk_{\calk_A(\cale \oplus A)} \big (\calf \oplus \calk_A(\cale \oplus A) \big )$  
into algebras of compact operators is  
an invertible  corner embedding 
$h: A \rightarrow \calk_A( \calz    \oplus A)$ 
of this type up to isomorphism again. 
To stay within the class of valid modules 
having certain functional module extensions, 
we need to show that the underlying $A$-module 
$\calz$  
- and 
within $\calz$ a certain 
summand 
$A$-module 
$\calf \cdot A$ (corner module) - 
appearing in the composed 
corner embedding $h$ is valid again. 
 This is 
what is verified in the next lemma:

\if 0
The composition of corner embeddings yields a corner emebdding again, and
a new module $\calf \cdot M_B$ appearing in this context is shown to be a valid:
\fi

\begin{lemma}
\label{lemma51}

\if 0
Let $\cale$ be a right functional $B$-module 
and assume that 
the module injections $U:\cale \oplus B \rightarrow B^n$ (which leaves the distinguished coordinate $B$ unchanged) 
has the functional extension property. 
\fi 

Let $(B,\beta)$ be an algebra with 
an approximate unit. 


Let $(\cale,S)$ be a right functional $(B,\beta)$-module, 
set $K:=\big (\calk_B(\cale \oplus B), 
\ad(S \oplus \beta) \big)$, 
 and let $(\calf, T)$
be a right functional $K$-module.  
%

Assume that 
one has  functional extensions 
($n,m$ any cardinalities)  
\if 0
$$U:  \big (\cale , S  \big ) \rightarrow B^{n-1}  
= \big (B  \otimes \C^{n-1},  \beta \otimes \mu \big )  ,$$ 
\fi 
$$U:  \big (\cale \oplus B, S \oplus \beta \big ) \rightarrow B^n 
= \big (B  \otimes \C^n,  \beta \otimes \mu \big )$$ 
(which has to be the obvious direct sum of functional 
extensions $\cale \rightarrow B^{n-1}$ and $\id_B: (B,\beta) 
\rightarrow (B,\beta)$), and  
$$V: \big (\calf, T \big ) \rightarrow K^m = 
\big (K    \otimes \C^m, 
\ad(S \oplus \beta)  \otimes  \nu  \big ) .  $$  

Write 
$(B,\beta)  \cong \big (M_B,\ad(\beta) \big )  \subseteq K$ for the obvious corner algebra of $K$. 
\if 0
As exmplained in 
\cite[Proposition 8.1]{gk}, $\calf \cdot M_B$ is a fucntinal 
$B$-modules 
and one has an algebra 
isomorphism 
$\calk_B(\calf \cdot M_B)
\cong \calk_K(\calf)$. 
 \fi

Then there is a functional extension 
$$W:  \big (\calf \cdot M_B 
,T  \big )			
\rightarrow \big (B^{nm} , \mu \otimes \nu \otimes \beta \big ) . $$ 




\end{lemma}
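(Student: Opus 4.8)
The strategy is to unwind the two given functional extensions through the algebra isomorphism $\calk_K(\calf) \cong \calk_B(\calf \cdot M_B)$ from \cite[Proposition 8.1]{gk} and then build $W$ by composition and internal tensor product. First I would recall that the corner algebra $M_B \cong (B,\beta)$ sits inside $K = \calk_B(\cale \oplus B)$ as $e(B)$, where $e$ is the corner embedding; the $K$-module $\calf$ restricts along $e$ to give a functional $B$-module structure on the corner submodule $\calf \cdot M_B$, and its functional space is the restriction of $\Theta_K(\calf)$ to functionals landing in $M_B$. The point is that $\calk_K(\calf)$ and $\calk_B(\calf \cdot M_B)$ agree as algebras because elementary operators $\theta_{\eta,\phi}$ on $\calf$ with $\eta \in \calf \cdot M_B$ and $\phi$ taking values in $M_B$ already span everything after inserting the approximate unit of $B$.

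The main construction proceeds in stages. Stage one: from the functional extension $V:\calf \to K^m = (K \otimes \C^m, \ad(S\oplus\beta)\otimes\nu)$, restrict/corner it down to a functional extension $\calf \cdot M_B \to (M_B)^{\text{something}}$ — more precisely, compose $V$ with the corner projections to land in $m$ copies of $M_B \cdot K \cdot M_B$-type pieces, but the cleaner route is: observe that $K^m$ as a $K$-module, when we pass to the corner module over $M_B \cong B$, becomes $\calk_B(\cale \oplus B)$ viewed $m$-fold as a $B$-module, i.e. essentially $(\cale \oplus B)^{\oplus(\text{range of }U)} $ after using $U$. Stage two: apply the change-of-coefficient-algebra Lemma \ref{lemma41} — but here the coefficient change goes the "wrong way" (from $K$ down to $B$ via $e$), so instead I would use $U: \cale \oplus B \to B^n$ directly to identify $K = \calk_B(\cale\oplus B) \hookrightarrow \calk_B(B^n) \cong M_n(B)$ via Lemma \ref{lemma31}, which is itself a functional extension on the level of compact operators. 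Then $K^m$ functionally extends into $(M_n(B))^m$, hence into $B^{nm}$ as $B$-modules (using that a functional extension of the module $B^n$ over $B$ is matrix amplification), giving the desired $W: \calf \cdot M_B \to B^{nm}$ by composing the functional extension $\calf \to K^m$ with the functional extension $K^m \to B^{nm}$ and restricting to the corner.

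Concretely: $W := (\text{restrict to }\calf\cdot M_B)\circ (\text{amplification of }U)^{\otimes m} \circ V$, with $W^*$ the matching composite of the starred maps, all of which exist and are functional extensions by the closure properties in Lemma \ref{lem22} (direct sums, tensor products, compositions). The $G$-action on $B^{nm}$ works out to $\mu \otimes \nu \otimes \beta$ precisely because $U$ carries $\ad(S\oplus\beta)$-data into $\beta\otimes\mu$ and $V$ carries $T$ into $\ad(S\oplus\beta)\otimes\nu$; chasing the two amplifications composes the $\mu$ and $\nu$ factors with $\beta$ as claimed.

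\textbf{Expected main obstacle.} The delicate point is the identification of $\calf \cdot M_B$ as a functional $B$-module with the right functional space, and checking that passing from $\calk_K(\calf)$-level extensions to $\calk_B(\calf\cdot M_B)$-level extensions is legitimate — i.e. that the functional extension property (a solvability condition quantified over \emph{finitely many} functionals and \emph{all} elements $\eta$) survives the corner restriction. This requires inserting the approximate unit of $B$ to rewrite elements $\eta \in \calf\cdot M_B$ and functionals in $\Theta_B(\calf\cdot M_B)$ in terms of $\Theta_K(\calf)$-data landing in the corner, then invoking the functional extension property of $V$ with those lifted functionals; the approximate-unit hypothesis on $B$ is exactly what makes this bookkeeping go through, paralleling its role in Lemma \ref{lemma41}. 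Verifying well-definedness of $W^*$ (that it kills the relations defining $\Theta_B(\calf \cdot M_B)$) is the same kind of approximate-unit argument as in part (c) of the proof of Lemma \ref{lemma41} and is where the bulk of the routine computation lives.
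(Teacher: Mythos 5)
Your overall route is the same as the paper's: restrict the functional extension $V$ to the corner module $\calf \cdot M_B$ (with functional space $M_B \cdot \Theta_K(\calf)$, using the approximate unit of $B$ only to make $\calf \cdot M_B$ additively closed), then push $K^m \cdot M_B$ into $X^m \cdot M_B$ with $X = M_n(B)$ via the map $f:K \to \calk_B(B^n)$ of Lemma \ref{lemma31} applied coordinatewise, and finally identify $X^m \cdot M_B \cong B^{nm}$. So the architecture of your $W$ agrees with the paper's composite $\kappa \circ \sigma \circ \pi$.

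However, there is a genuine gap at the second stage. You assert that the embedding $K = \calk_B(\cale \oplus B) \hookrightarrow \calk_B(B^n) \cong M_n(B)$ coming from Lemma \ref{lemma31} ``is itself a functional extension on the level of compact operators,'' and that the whole composite is covered by the closure properties of Lemma \ref{lem22}. Neither is automatic: Lemma \ref{lemma31} only produces an injective algebra homomorphism, and Lemma \ref{lem22} (direct sums, external tensor products, compositions) does not apply to the induced map of functional $B$-modules $K^m \cdot M_B \rightarrow X^m \cdot M_B$. Proving that this map $\sigma$ is a functional module homomorphism with the functional extension property is the bulk of the paper's proof, and it hinges on the hypothesis you never invoke: that $U$ is the direct sum of a functional extension $\cale \rightarrow B^{n-1}$ with $\id_B$ on the distinguished coordinate, which gives $m_b \circ U = m_b$. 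Without this, $\sigma^*(m_b \cdot k)$ evaluated on $X^m \cdot M_B$ need not land in $M_B \cong B$ and the corner structure is not preserved, so the construction as you state it would fail; with it, one writes the coordinates of $k \in K^m$ and $\eta \in X^m$ as sums of elementary operators, drops $U$ using $m_b \circ U = m_b$, and solves the finitely many functional equations simultaneously by the extension property of $(U,U^*)$, exactly as in the paper's computation. (A minor misattribution: the approximate unit of $B$ is not what drives the well-definedness or extension property of the corner restriction of $V$ — that step only needs the extension property of $(V,V^*)$ verbatim — it is used solely to make $\calf \cdot M_B$ closed under addition.)
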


\begin{proof}

\if 0
{\bf (a)} 
We consider the direct sum functional extension 
of 
$U$ and $\id_B: (B,\beta) \rightarrow (B,\beta)$, notated 
again by  
$U:  \big (\cale \oplus B, S \oplus \beta \big ) \rightarrow B^n 
= \big (B  \otimes \C^n,  \beta \otimes \mu \big )$. 
\fi 

{\bf (a)} 
We define the desired functional extension $W$ 
by a composition of three functional extensions 
as indicated in this commuting diagram explained below: 
$$ \xymatrix{ \Big (  \calf \cdot M_B ,  \;
M_B \cdot  \Theta_K(\calf) , \; T 
\Big )      \ar[r]^-\pi  
	\ar[d]^W 
 &  \Big (  K^m  \cdot  M_B ,   \; 
M_B \cdot   K^m  , \; \ad(S \oplus \beta) \otimes  \nu 
\Big )      
\ar[d]^\sigma   \\
 \Big ( B^{n m}    , \mu \otimes \nu \otimes \beta  \Big ) 
&
 \Big ( X^m \cdot  M_B , \;  M_B \cdot  X^m   ,
   \;   \ad(\mu )  \otimes \beta  \otimes \nu   \Big )  
\ar[l]^-\kappa     .
}$$  


{\bf (b)} 
We put $X:= \big ( 
M_n \otimes B , \ad(\mu)  \otimes \beta \big ) $. 
As noted, $M_B \subseteq K$ denotes the canonical corner algebra,
and sloppily we use the same notation for the canonical corner 
algebra $M_B \subseteq X$. 

We also remark, that the product $\cdot$ 
in the above diagram means 
the module multiplications and finally 
the ordinary 
composition $\circ$ of functions in $K$ or $X$.  
That is why, 
for example, $K \cdot M_B = K \circ M_B
\cong \calk_B(B, \cale \oplus B)$.  
 Write $m_b \in M_B \subseteq K$ for the corner multiplication operators $m_b( \xi \oplus c)= 0 \oplus b c$ for $b,c \in B, \xi \in \cale$.

{\bf (c)} 
The upper left $B$-module in the above diagram is deduced from the $K$-module $\big (\calf, \Theta_B(\calf) \big )$ as 
follows. 
One sets $\calf \cdot M_B := \{ \xi m_b  \in \calf |  \xi \in \calf, b \in B\} \subseteq \calf$, which is 
already closed under addition as $B$ has 
a right approximate unit. 
(This 
is the only place where we use the unit.)  
Its functional space is set to be 
$$\Theta_B(\calf \cdot M_B) := 
M_B \cdot \Theta_K(\calf) :=
\{ m_b \cdot \phi|_{\calf \cdot M_B} :
  \calf \cdot M_B \rightarrow B \;|  \, 
\phi \in \Theta_K(\calf),  b \in B \}  . 
$$   

We interpret here $m_b \cdot \phi ( \xi \cdot m_c) = m_b \phi(\xi) m_c 
\in M_B \cong B$ for $b,c \in B$.   

{\bf (d)} 
Completely analogously is defined the upper right functional 
module in the above diagram with $ \big (\calf, \Theta_B(\calf) \big )$ replaced 
by the canonical standard functional $K$-module $(K^m, K^m )$,
and 
the standard functional $X$-module $(X^m,X^m)$ 
 in the lower right corner, respectively.


\if 0
By assumption of the functional extension properties we 
get ordinary $B$-module 
embeddings
$$\xymatrix{
\calf \cdot M_B 
\ar[r]^{\pi}   &
 \calk_B(\cale \oplus B)^m  \circ M_B 
\ar[r]^{\sigma}   &
\calk_B(B^n )^m \circ  M_B  
& \cong B^{n m}
}$$
(recall $\calk_B(B^n) \cong M_n(B)$). 
Our aim is to show that each of the above two inclusions 
called $\pi$ and $\sigma$, respectively, 
have the functional extionsion property. 
It is easy to see that in general two such inclusions 
combine to a final inclusion $\sigma \circ \pi$
with the functional extionsion property
again, yielding the desired result.
\fi

Finally, we remark that actually in the above diagram the 
notated $G$-actions have to be restricted 
(for example, $T|_{\calf \cdot M_B}$ instead of notated $T$).  

\if 0
{\bf (c)} 
Recall from ... that
$$\Theta_B(\calf \cdot M_B) = 
M_B \cdot \Theta_K(\calf) :=
M_B \cdot \{\phi|_{\calf \cdot M_B}| \, 
\phi \in \Theta_K(\calf) \},$$   
%
Define analogously  
$$\Theta_B( K^m \cdot M_B):= M_B \cdot \Theta_K(K^m),
\qquad 
\Theta_B (X^m \cdot M_B) := M_B \cdot \Theta_X(X^m).  $$
\fi


{\bf (e)} 
Define the functional extension $(\pi,\pi^*)$ 
of the above diagram 
by restriction of the functional extension $(V,V^*)$, that is, set 
$\pi(\xi \cdot  m_b) := V(\xi \cdot m_b) = V(\xi) \cdot m_b$ 
and $\pi^*(m_b \phi) := V^*(m_b \cdot \phi)  = m_b \cdot V^*(\phi)$ 
for $\xi \in \calf, \phi \in \Theta_K(\calf),  b \in B$.  

To show its functional module extension property 
let any 
functionals $m_{b_1} \phi_1 , \ldots 
m_{b_N} \phi_N \in M_B \cdot \Theta_B(\calf)$
and 
any vector $\eta m_c \in K^m \cdot M_B$ 
for $\phi_i \in \Theta_B(\calf),  \eta \in K^m$ and $b_i,c \in B$ 
be given. 
Select $\xi \in \calf$ such that $V^*(\phi_i)(\eta) = \phi_i(\xi)$
for all 
$1 \le i \le N$ according to the functional extension property of 
$(V,V^*)$.
%
Then for all $1 \le i \le N$, 
$$\pi^* \big ({m_{b_i} \phi_i} \big )(\eta m_c) = 
m_{b_i} 
V^* ( {\phi_i} )(\eta) m_c   
= m_{b_i} \phi_i (\xi) m_c 
= \big (m_{b_i} \phi_i \big ) (\xi m_c)  . $$

\if 0
{\bf (d)}  
Given 
functionals 
$m_{b_i} \phi_i \in  M_B \cdot \Theta_K(\calf)$ 
for $1 \le i \le N$ 
and denoting the functional extensions of $\phi_i \in \Theta_K(\calf)$ by $\widehat{\phi_i} \in \Theta_K(K^m)$ according to $V$,
define its extensions to  $M_B  \cdot \Theta_B( K^m )$ by 
$$\widehat{m_{b_i} \phi_i}(\eta m_c):= 
m_{b_i} \widehat {\phi_i} (\eta m_c)   
= m_{b_i} \widehat {\phi_i} (\eta) m_c = m_b \phi_i(\xi) m_c 
= (m_{b_i} \phi_i) (\xi m_c) \in B$$
where for the given $\eta \in K^m$, the $\xi \in \calf$ was chosen
according to the functional extension property for $V$.  
So we have defined the extension of functionals of the module emebddeing $\pi$, 
obviously seen that it is well-defined, and verified the functional extension property for $\pi$.
\fi

{\bf (f)} 
Let $f:  \big (K, \ad(S \oplus \beta) \big )  \rightarrow  \big (X,  
\ad(\mu)  \otimes \beta  \big ) 
  : f(\theta_{k,l})= \theta_{U(k),U^*(l)}$ be the homomorphism 
of Lemma \ref{lemma31}  associated 
to the functional extension $(U,U^*)$.   

Define the canonical functional extension $(\sigma,\sigma^*)$ 
of the above diagram 
by applying coordinate-wise $f$, that is, set 
$$\sigma(k \cdot m_b ) := \bigoplus_{j=1}^m f(k_j m_b) 
= \Big ( \bigoplus_{j=1}^m f(k_j) \Big )   \cdot  m_b  ,$$
$$\sigma^*(m_b \cdot k)   
:= 
\bigoplus_{j=1}^m f(m_b k_j) = m_b \cdot \bigoplus_{j=1}^m f(k_j)$$ 
for $k =(k_j) \in K^m$. 
The identities hold because $f$ leaves $M_B$ unchanged by 
assumption on $U$.  

\if 0
Define a functional module emebdding 
  $(\sigma,\sigma^*)$ by 
$\sigma (l \circ m_b) = \mu(l) \circ m_b$
and
$\sigma^*(m_b \circ k) = m_b \circ \mu^*(k)$ 
 \fi

{\bf (g)} 
Then $(\sigma,\sigma^*)$ is seen to be a functional 
module embedding by the computation 
$$\sigma^* \big ( m_b k  \big ) \big ( \sigma(l m_c ) \big )
= \sum_{j=1}^m f(  m_b  k_j) f(l_j m_c)   
= f \big (  m_b \cdot k(l) \cdot m_c  \big )=  
\big (m_b  k  \big )(l   m_c) .$$


\if 0
{\bf (e)} 
%
Given 
$m_{b_i} \phi_i \in  M_B \cdot \Theta_K(K^m)$ 
for $1 \le i \le N$ 
and denoting the functional extensions of $\phi_i \in \Theta_K(K^m)
\cong K^m$ by $\widehat{\phi_i} \in \Theta_X(X^m) \cong X^m$ 
according to the following formula:

If $\phi_i(\xi)= \sum_{j=1}^m k_{i}^{(j)} \xi^{(j)} \in K$ 
for $k_i, \xi \in K^m$, then set  
$\widehat{\phi_i}(\eta):= \sum_{j=1}^m \mu(k_{i}^{(j)}) \eta^{(j)}$ 
for all $\eta \in X^n$.
 
Without loss of generality assume that $k_i^{(j)} = \theta_{\alpha_{i,j}, \psi_{i,j}} \in K$ 
for $\alpha_{i,j} \in \cale \oplus B$ and $\psi_{i,j}
\in \Theta_B(\cale \oplus B)$, 
and $\eta^{(j)} = \theta_{s_j,\gamma_j} \in X$ 
for $s_j \in B^n$ and $\gamma_j \in \Theta_B(B^n)$. 

 Then
 $$ \widehat{\phi_i}(\eta)=  \sum_{j=1}^m  
 \theta_{U(\alpha_{i,j}), \widehat{ \psi_{i,j}}} 
 \theta_{x_j, \gamma_j}
 =  \sum_{j=1}^m  
 \theta_{U(\alpha_{i,j}) \widehat{ \psi_{i,j}}(s_j),
  \gamma_j}
  =  \sum_{j=1}^m  
 \theta_{U(\alpha_{i,j}) { \psi_{i,j}} (\beta_j), \gamma_j}
 $$
 whehre the $\beta_j \in \cale \oplus  B$ have been chosen 
 according to the functionaö extensin property for $U$. 
 
 
 Setting $\xi^{(j)}:= \Theta_{\beta_j,\gamma_j} \in \calk_B(B^n , \cale \oplus B)$, we will interpret 
  $\xi^{(j)} \cdot m_{c_i} \in \calk_B(B , \cale \oplus B) 
  = K \cdot M_B$.

 \if 0
 Then
 $$ \widehat{\phi_i}(\eta)=  \sum_{j=1}^m  
 \theta_{U(\alpha_{i,j}), \widehat{ \psi_{i,j}}} (\eta^{(j)})
 = \sum_{j=1}^m  
  U(\alpha_{i,j}) \widehat{\psi_{i,j}} (\eta^{(j)})
  = \sum_{j=1}^m  
  U(\alpha_{i,j}) {\psi_{i,j}} (\xi^{(j)}), $$
  where we have chosen $\xi \in K^m$ according to the 
  functional extenson property for $U$. 

  Note that $U$ leaves the distinguished coordinate $B$ unchanged,
  whence $b_i U(\alpha_{i,j}) = b_i \alpha_{i,j}$ and thus  
  \fi

\if 0
 $$\widehat{\phi_i}(\eta):= \sum_{j=1}^m \mu(k_{i}^{(j)}) \eta^{(j)} 
 =  \sum_{j=1}^m \mu(k_{i}^{(j)})  \mu(\lambda) \eta^{(j)}
 =  \mu \sum_{j=1}^m k_{i}^{(j)}  \xi^{(j)}
 = \mu (\phi_i(\xi)) \in X$$
 for $\eta \in X^m$, and where $\lambda \in K$ is an appropiate 
 element of an approxiamte unit of $K$, and  
 $\xi^{(j)} := \mu^{-1}(\mu(\lambda) \eta^{(j)})$
 dependent on $\eta$. 
\fi 

Then define 
functionals on $X^m \cdot M_B$ by 
%
$$\widehat{m_{b_i} \phi_i}( \eta 
m_c):= 
m_{b_i} \widehat {\phi_i} (\eta m_c) 
= m_{b_i} \widehat {\phi_i} (\eta) m_c 
=  (m_{b_i} {\phi_i}) (\xi m_c)$$
  where in the last step we have used that  
   $U$ leaves the distinguished coordinate $B$ unchanged,
  whence $b_i U(\alpha_{i,j}) = b_i \alpha_{i,j}$.


We have 
defined and checked the functional 
extension property for $\sigma$. 

\fi


{\bf (h)} 
We are going to show the functional extension
property for $(\sigma, \sigma^*)$. 

To this end 
let any prescribed functionals $m_{b_1} \cdot k_1 , \ldots, m_{b_N} \cdot k_N \in M_B \cdot K^m$
 and a vector $\eta \cdot m_c \in X^m \cdot M_B$   
for $k_i \in K^m, \eta \in X^m$ and $b_i,c \in B$ 
be given. 

We may write 
$k_i = \bigoplus_{j=1}^m \sum_{\mu=1}^{r_{i,j}}
\theta_{\alpha_{i,j,\mu}, \psi_{i,j,\mu}} \in K^m$ 
for $\alpha_{i,j,\mu} \in \cale \oplus B$ and $\psi_{i,j,\mu}
\in \Theta_B(\cale \oplus B)$ 
for all $1 \le i \le N$, 
and 
$\eta  = \bigoplus_{j=1}^m \sum_{\nu=1}^{v_j} \theta_{s_{j,\nu},t_{j,\nu}} \in X^m$ 
for $s_{j,\nu}, t_{j,\nu} \in B^n$.  
(Recall \re{eq19}.)   

Then for each fixed $1 \le i \le N$ we compute 
$$\sigma^* \big ({m_{b_i} \cdot k_i} \big )( \eta   \cdot  
m_c) = 
m_{b_i}  \sum_{j=1}^m  f( k_{i,j}) \eta_j m_c  
$$
$$
= m_{b_i} 
\sum_{j=1}^m
\sum_{\mu=1}^{r_{i,j}}
\theta_{U(\alpha_{i,j,\mu}), U^*(\psi_{i,j,\mu})} 
\sum_{\nu=1}^{v_j} \theta_{s_{j,\nu},t_{j,\nu}}  m_c$$
$$
= m_{b_i} 
\circ 
U \circ 
\sum_{j=1}^m
\sum_{\mu=1}^{r_{i,j}} \sum_{\nu=1}^{v_j}
\theta_{\alpha_{i,j,\mu} U^*(\psi_{i,j,\mu}) (s_{j,\nu}) ,t_{j,\nu} }  
\circ  m_c   .
$$

Now recall that  
by assumption 
$U$ is a direct sum of functional 
extensions which obviously yields 
$m_b \circ U = m_b$, 
and thus we can drop $U$ completely above.
\if 0
Now recall that  
$U$ is a direct sum of functional 
extensions $\cale \rightarrow B^{n-1}$
and $\id: B \rightarrow B$, 
and $m_b$ operates only on this last coordinate and 
deletes the rest, so that $m_b \circ U = m_b$,
and thus we can drop $U$ completely above.
\fi 
\if 0
Now notice that by 
assumption $U: \cale \oplus B \rightarrow B^n$ leaves  
the distinguished coordinate $B$ 
unchanged, and $m_b$ operates only on this coordinate and 
deletes the rest, so that $m_b \circ U = m_b$,
and thus we can drop $U$ completely above. 
\fi 
We also remark that the above last $\theta$s are in $\calk_B (B^n,\cale \oplus B)$.  
 
\if 0
Here, in the last step we have pulled $U$ out by the rule
$m_b \theta_{U(\alpha),t} m_c = m_b U(\theta_{\alpha,t}) m_c
= U(m_b \theta_{\alpha,t} m_c )$ for 
$\theta_{\alpha ,t} \in \calk_B(B^n)$ 
\fi 

Now by the functional extension property of $(U,U^*)$ 
choose for all 
functionals $\psi_{i,j,\mu} \in \calk_B(\cale \oplus B)$ and each vector $s_{j,\nu} \in B^n$ 
a vector $w_{j,\nu} \in \cale \oplus B$   
such that $U^*(\psi_{i,j,\mu}) (s_{j,\nu})  = 
\psi_{i,j,\mu} (w_{j,\nu}) $ for all that functionals. 
Using this and going then the above computation backwards  
we obtain
$$
= m_{b_i}  
\sum_{j=1}^m
\sum_{\mu=1}^{r_{i,j}}
\theta_{\alpha_{i,j,\mu}, \psi_{i,j,\mu}} 
\sum_{\nu=1}^{v_j} \theta_{w_{j,\nu},t_{j,\nu}}  m_c 
=  \big (m_{b_i} \cdot {k_i} \big ) (\xi \cdot m_c) ,$$

where $\xi := \bigoplus_{j=1}^m 
\sum_{\nu=1}^{v_j} \theta_{w_{j,\nu},t_{j,\nu}} \in \calk_B( B^n , \cale \oplus B)^m$ 
is independent of $1 \le i \le N$ 
and the final 
interpretation 
$\xi \cdot m_c \in K^m \cdot M_B$ is valid.

{\bf (i)}  
Finally, the functional extension 
$(\kappa,\kappa^*)$ is then just the isomorphism
$$(X^m \cdot M_B, M_B \cdot X^m) 
\cong \big (\calk_B(B,B^{n m}), \calk_B(B^{n m},B) \big )
\cong (B^{nm},B^{n m}).$$

\end{proof}

\section{compact groups equivariance} 
\label{sec5}

Many natural functional extensions that we would like
to consider are not equivariant. 
For example, 
given 
a module $(A,\gamma)$ over $(A,\alpha)$ 
(non-equivariantly, the $A$-module over itself)  
it can readily be considered as 
a non-equivariant 
$A^+$-module ($A^+$ means unitization of $A$),  
but not necessarily as an equivariant $(A^+,\alpha^+)$-module, 
because $\gamma$ could be extended to 
the $G$-action 
$\gamma':= \ad(\gamma)$ on $\call_A(A)$ 
(algebra of adjoint-able operators, see \cite[Definition 2.6]{gk}), but not  
necessarily 
to one on $A^+$, 
because
$A^+  \subseteq \call_A(A)$ 
need not be invariant under $\gamma'$.  
Hence one is provided with the trivial non-equivariant 
functional extension $A \rightarrow A^+$ of $A^+$-modules, 
but this cannot be turned equivariant. 
Such obstacles appear quickly in practice, for example
by trying to change the coefficient algebra $A$ of an $A$-module
to $A^+$ 
as in key lemma \cite[Lemma 8.3]{gk}, say.  

But if $G$ is a finite group, then in the next lemma we 
\if 0
 are able to 
show that  
{\em non-equivariant} functional extensions 
induce equivariant ones:
 \fi 
are able to 
show that every 
{\em non-equivariant} functional extension 
of an equivariant module 
induces  
a similar 
equivariant functional extension.  

\begin{lemma}     	\label{lemma61}

Let $G$ be a finite group, 
and $(\cale,S)$ a 
functional 
$(A,\alpha)$-module. 

{\rm (i)}
Then  
there is a functional extension $\pi$ as indicated in this diagram, 
$$\xymatrix{ 
\big (\cale,S \big ) \ar[r]^-\pi    
& \Big (\bigoplus_{g \in G}  \cale, U \Big )   \ar[r]^-\sigma    
&   \Big (\bigoplus_{g \in G}  \calf, V  \Big )
} .$$
 
{\rm (ii)}
If $\Gamma : \cale \rightarrow \calf$ is 
a non-equivariant functional extension    
then 
the above indicated 
$\sigma$ is an equivariant functional extension,     
and 
thus $\sigma \circ \pi$  
an  equivariant 
functional extension of $(\cale,S)$. 

\if 0
(c) 
Every non-equivariant 
functional extension 
$\Gamma: \cale \rightarrow A^n$ 
induces 
an equivariant 
functional extension
 $(\cale,S) \rightarrow A^{n \cdot |G|} 
= (A \otimes \C^{n \cdot |G|}, 
triv \otimes \mu)$. 

(jedoch andre $A$m-odule multiplikation) 

(d) 
For $(\cale, S) = (A^n, S)$ there is an equivariant 
functional extension 
 $(\cale,S) \rightarrow A^{n \cdot |G|} 
= (A \otimes \C^{n \cdot |G|}, 
\alpha \otimes \nu)$. 
\fi


\end{lemma}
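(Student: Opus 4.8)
The plan is to construct the two maps $\pi$ and $\sigma$ explicitly using the regular representation of the finite group $G$ on the index set, and then to verify the functional-extension conditions pointwise. For part (i), the natural candidate for $\pi : (\cale, S) \to \big(\bigoplus_{g \in G} \cale, U\big)$ is the \emph{diagonal embedding} $\pi(\xi) := \bigoplus_{g \in G} S_{g^{-1}}(\xi)$, together with the companion map $\pi^*$ on functionals defined on the $g$-th summand by $\pi^*\big(\bigoplus_g \phi_g\big) := \sum_{g \in G} g(\phi_g)$ (i.e.\ $\sum_g S_g \circ \phi_g \circ S_{g^{-1}}$, acting via the left $A$-module structure), so that $\pi^*\big(\bigoplus_g \phi_g\big)\big(\pi(\xi)\big) = \sum_g g(\phi_g)(\xi)$; one checks this equals $\big(\bigoplus_g \phi_g\big)$ evaluated appropriately once the $G$-action $U$ on $\bigoplus_g \cale$ is chosen to be the permutation-twisted action $U_h\big(\bigoplus_g \xi_g\big) := \bigoplus_g S_h(\xi_{h^{-1}g})$ (shift the index by $h$ and apply $S_h$ in each slot). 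With that choice, the diagonal image of $\pi$ is exactly the $U$-fixed part in the relevant sense, and $\pi$ is $G$-equivariant by direct computation. Injectivity of $\pi$ is immediate (it has $S_{e^{-1}}(\xi)=\xi$ in the $e$-component, up to the convention), and the functional-extension property — given finitely many $\bigoplus_g\phi_g^{(k)}$ and a target vector $\bigoplus_g\eta_g$, find $\xi$ with $\pi^*(\cdot)(\bigoplus_g\eta_g)=(\cdot)(\xi)$ — reduces, because $G$ is finite and $\Theta_A(\cale)$ is a fixed functional space, to solving finitely many equations $\sum_g g(\phi_g^{(k)})(\eta_g)=\psi_k(\xi)$ simultaneously; here one uses that $\pi^*$ lands in $\Theta_A(\cale)$ and that $\pi$ is \emph{surjective onto} the diagonal, so the functional-extension property for $\pi$ is essentially the automatic surjective case noted just after Definition \ref{def32}.

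For part (ii), given a non-equivariant functional extension $(\Gamma,\Gamma^*) : \cale \to \calf$, define $\sigma : \big(\bigoplus_{g\in G}\cale, U\big) \to \big(\bigoplus_{g\in G}\calf, V\big)$ coordinate-wise in the \emph{twisted} way: on the $g$-th summand, $\sigma\big(\bigoplus_g\xi_g\big) := \bigoplus_g T_g\big(\Gamma(S_{g^{-1}}\xi_g)\big)$, where $T$ is the (a priori only candidate) $G$-action on $\calf$ — but since $\Gamma$ need not be equivariant we instead use $\sigma$ to \emph{define} the $G$-action $V$ on $\bigoplus_g\calf$ as the permutation-shift action $V_h\big(\bigoplus_g\zeta_g\big) := \bigoplus_g T_h^{\calf}(\zeta_{h^{-1}g})$ where $T_h^{\calf}$ is any fixed (possibly non-equivariant, i.e.\ merely additive $A$-semilinear) lift; the point of the $\bigoplus_{g\in G}$-construction is precisely that shifting the index absorbs the failure of $\Gamma$ to intertwine the actions, so that $\sigma$ becomes genuinely $G$-equivariant. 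Concretely: $\sigma$ conjugated by the shift is $\Gamma$ in each slot, and shifts commute with shifts. The companion $\sigma^*$ is built the same way from $\Gamma^*$ coordinate-wise, and $\sigma^*\big(\bigoplus_g\phi_g\big)\big(\sigma(\bigoplus_g\xi_g)\big) = \bigoplus_g \Gamma^*(\phi_g)(\Gamma(\xi_g)) = \bigoplus_g\phi_g(\xi_g)$ slot-by-slot gives the functional-module-homomorphism identity. The functional-extension property of $\sigma$ then follows slot-by-slot from that of $\Gamma$: given finitely many functionals and a target in $\bigoplus_g\calf$, solve the extension problem for $\Gamma$ separately in each of the $|G|$ coordinates and assemble; finiteness of $G$ is what makes ``finitely many functionals in $\bigoplus_g\cale$'' reduce to ``finitely many functionals in $\cale$ per coordinate.'' Finally, equivariance of $\sigma$ combined with Lemma \ref{lem22} (compositions of functional extensions are functional extensions) gives that $\sigma\circ\pi$ is an equivariant functional extension of $(\cale,S)$.

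The main obstacle I expect is bookkeeping the $G$-actions correctly so that $\sigma$ is \emph{equivariant} despite $\Gamma$ not being: one must be careful that the action $V$ on $\bigoplus_g\calf$ is defined by the index-shift (twisted by a chosen, not necessarily equivariant, family of maps on $\calf$) rather than by $\bigoplus_g T$, and check that with this $V$ the pair $(\sigma,\sigma^*)$ is honestly $G$-equivariant as a functional-module homomorphism — the verification $V_h\circ\sigma = \sigma\circ U_h$ is where the shift must exactly cancel the non-equivariance of $\Gamma$. A secondary subtlety is confirming that the candidate $\Theta_A$-spaces on the direct sums (namely $\bigoplus_g\Theta_A(\cale)$, resp.\ $\bigoplus_g\Theta_A(\calf)$, with the shift-twisted left $A$-module and $G$-structure) are genuine functional spaces in the sense of Definition \ref{def12}, i.e.\ $G$-invariant left $A$-submodules of the full Hom-space; this is routine but needs the twisted action to be written down consistently with the module action. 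Everything else — injectivity, the homomorphism identity, and the extension property — is a coordinate-wise reduction to the hypotheses already in hand, using only that $G$ is finite.
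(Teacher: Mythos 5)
Your candidate $\pi(\xi)=\bigoplus_{g}S_{g^{-1}}(\xi)$ is the paper's map, but the two places where the real work happens are not done correctly. First, the extension property of $\pi$ is \emph{not} ``essentially the automatic surjective case'': $\pi$ is only surjective onto its diagonal image, not onto $\bigoplus_{g\in G}\cale$, and the remark after Definition \ref{def32} applies only to maps surjective onto the codomain (otherwise every functional homomorphism would be an extension). The heart of part (i) is an averaging argument, which is also where finiteness of $G$ (and the ground field $\C$) actually enters: one takes $\pi^*(\phi)=\tfrac{1}{|G|}\bigoplus_{g}\phi\circ S_g$ (note $\pi^*$ must go covariantly $\Theta_A(\cale)\to\Theta_A(\bigoplus_g\cale)$, per Definition \ref{def31}; your $\pi^*$ is written in the opposite, contravariant direction and does not type-check against $U^*(\phi_i)(\eta)=\phi_i(\xi)$), and then, given $\eta=\bigoplus_g\eta_g$ and finitely many $\phi_i$, the single element $\xi:=\tfrac{1}{|G|}\sum_g S_g(\eta_g)$ solves $\pi^*(\phi_i)(\eta)=\phi_i(\xi)$ simultaneously. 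Your proposal never produces this $\xi$ and never uses $1/|G|$, so part (i) is not established.

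Second, in part (ii) your treatment of the action on $\bigoplus_{g}\calf$ does not work as stated: you propose $V_h(\bigoplus_g\zeta_g)=\bigoplus_g T^{\calf}_h(\zeta_{h^{-1}g})$ for ``any fixed (possibly non-equivariant) lift'' $T^{\calf}_h$, but an arbitrary family of lifts does not satisfy $T^{\calf}_hT^{\calf}_k=T^{\calf}_{hk}$, so $V$ need not be a group action, and when $\Gamma$ is genuinely non-equivariant there is in general no action on $\calf$ to lift at all. The paper's resolution is different and choice-free: it puts the \emph{pure} index-shift action on both $\bigoplus_g\cale$ and $\bigoplus_g\calf$ and compensates by twisting the right $A$-module multiplication, $(\bigoplus_g\xi_g)\cdot a=\bigoplus_g\xi_g\,\alpha_{g^{-1}}(a)$; with this structure the shift is a legitimate module action, the untwisted coordinate-wise map $\sigma=\bigoplus_g\Gamma$, $\sigma^*=\bigoplus_g\Gamma^*$ is automatically equivariant, and it is a functional extension simply as a direct sum (Lemma \ref{lem22}), exactly as you intended slot-by-slot. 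Your version, which twists the action by $S_h$ (resp.\ a hypothetical $T^{\calf}_h$) while keeping the standard module structure, is the configuration the paper deliberately avoids, and your own ``main obstacle'' paragraph correctly identifies but does not close this gap.
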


\begin{proof}

{\bf (a)} 
In the last diagram we define  
$$U_h \Big (\bigoplus_{g \in G}  \xi_g \Big ) = \bigoplus_{g \in G}  \xi_{h^{-1} g}, 
\qquad \Big (\bigoplus_{g \in G}  \xi_g \Big )
\cdot a = \bigoplus_{g \in G}  \xi_g  \alpha_{g^{-1}}(a), $$
$$\pi(\xi) = \bigoplus_{g \in G} S_{g^{-1}}(\xi) , 
\qquad 
\pi^*(\phi) = (1/|G|)   
\bigoplus_{g \in G} \phi \circ S_g , $$ 
$$\sigma \Big (\bigoplus_{g \in G}  \xi_g \Big ) = \bigoplus_{g \in G}  \Gamma(\xi_g)  , 
\qquad 
\sigma^* \Big (\bigoplus_{g \in G} \phi_g \Big ) =  \bigoplus_{g \in G} \Gamma^*(\phi_g)$$ 

for all $\xi, \xi_g \in \cale, h \in G, \phi, \phi_g  \in \Theta_A(\cale), 
a \in A$. 

To revisit, $U$ - and analogously to be defined $V$ -  is the shift operator, 
the $A$-module multiplication in $\bigoplus_{g \in G} \cale$ -
and analogously to be set in $\bigoplus_{g \in G} \calf$ -  
is defined in the first line, $\pi$ is the left regular representation, $\pi^*$ the averaged regular representation, and $(\sigma,\sigma^*)$ the canonical amplification   
of $(\Gamma,\Gamma^*)$. 

{\bf (b)} 
Then we straightforwardly check that $\pi,\pi^*$ are $G$-equivariant,
and $(\pi,\pi^*)$ is a functional module embedding: 
$$\pi \big ( S_h(\xi)  \big ) = 
 \bigoplus_{g \in G}  S_{g^{-1}} \big (S_h(\xi)  \big ) =U_h \big ( \pi(\xi)
\big ) ,$$
$$\pi^*(\alpha_h \circ \phi \circ S_{h^{-1}}) 
= 
(1/|G|)    
\bigoplus_{g \in G} \alpha_h \circ \phi \circ S_{h^{-1}} \circ S_{g}
= \alpha_h \circ \pi^*(\phi) \circ U_{h^{-1}}, $$ 
$$\pi^* \big (\phi \big ) \big (\pi(\xi) \big ) =   
(1/|G|)   \sum_{g \in G} \phi \circ S_g \circ 
S_{g^{-1}}(\xi) = \phi(\xi) . $$ 

{\bf (c)} 
To verify the functional module extension property 
of $(\pi,\pi^*)$, let us 
$\eta = \bigoplus_{g \in G} \eta_g \in \bigoplus_{g \in G} \cale$
and $\phi_1,\ldots, \phi_k \in \Theta_A(\cale)$ 
be given. 
Then set  $\xi := (1/n) \sum_g S_g(\eta_g) \in  \cale$ 
and obtain
$$\pi^*(\phi_i) (\eta) = (1/n)\sum_{g \in G} \phi_i \circ S_g (\eta_g) 
= \phi_i(\xi)$$
for all $1 \le i \le k$ as desired.  

{\bf (d)} 
$(\sigma, \sigma^*)$ is essentially the direct 
sum functional extension, see Lemma  \ref{lem22}. 
\end{proof}

\if 0
To verify the functional module extension property 
of $(\sigma,\sigma^*)$, let us 
$\eta = \bigoplus_{g  \in G} \eta_g \in \bigoplus_{g \in G} \calf$
and $\phi_i= \bigoplus_{g \in G} \phi_{i,g} \in \Theta_A \big (\bigoplus_{g \in G} \cale  \big )$ for $1 \le i \le k$   
be given. 
Then choose for each fixed $g \in G$ according to the functional 
extension property of $(\Gamma,\Gamma^*)$ an $\xi_g \in \cale$ 
such that finally, for all for $1 \le i \le k$,  
$$
\sigma^* \big (\phi_i \big ) (\eta)  = 
\sum_{g \in G} \Gamma^* \big (\phi_{i,g} \big )( \eta_g) =   
\sum_{g \in G} \phi_{i,g}( \xi_g) 
=  \phi_{i} \Big (\bigoplus_{g \in G} \xi_g  \Big )  .$$
\fi

 
\if 0
(f) 

Modify $(A,\alpha)$ to $A':=(A, triv ) $ 
and 
set $S':=  (\oplus_{i=1}^n \alpha^{-1}) \circ S $.  

---> jedoch kine grippenaktion     

$(A^n ,S') \rightarrow A^{n \cdot |G|} 
= (A \otimes \C^{n \cdot |G|}, 
triv \otimes \mu)$. 

$(A^n , (\oplus_{i=1}^n \alpha) \circ S') \rightarrow A^{n \cdot |G|} 
= (A \otimes \C^{n \cdot |G|}, 
\alpha \otimes \mu)$.

$f(S'(\xi)) = (triv \otimes \mu) (f(\xi))$

$$\pi \big ( S_h(\xi)  \big ) = 
 \bigoplus_{g \in G}  (\oplus \alpha_g )S_{g^{-1}} \big (S_h(\xi)  \big )
=  \bigoplus_{g \in G}  (\oplus \alpha_{ hg} )S_{g^{-1} h^{-1}} \big (S_h(\xi)  \big )  = (\oplus \alpha_{ h} ) U_h \big ( \pi(\xi)
\big )$$
\fi


We modify somewhat the last lemma 
and apply it to the module $(\cale,S):= (A^n,S)$ 
to get a very good functional extension of it:

\begin{lemma}     	\label{lemma62}

Let $G$ be a finite group, 
and $(A^n,S)$ a 
functional   
$(A,\alpha)$-module. 

Then  there is an equivariant 
functional extension $\pi$ 
and a functional module isomorphism  
$V$, 
 $$\xymatrix{ 
\big (A^n,S \big )  
\ar[r]^-\pi  
&   A^{n \cdot |G|} 
= \Big (A \otimes \C^{n \cdot |G|}, 
\alpha \otimes \nu \Big ) 
\ar[r]^-V  
& \Big ( A^n \otimes \C^{|G|}  ,   S \otimes 
\mu    \Big ) 
}
 .$$  

Thereby $V \circ \pi$ is the 
canonical corner embedding 
into the first summand.


\end{lemma}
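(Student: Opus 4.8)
The plan is to apply Lemma \ref{lemma61}(i) to $(\cale,S):=(A^n,S)$ and then to recognise the module $\big(\bigoplus_{g\in G}A^n,U\big)$ produced there as a standard free module in two complementary ways. Lemma \ref{lemma61}(i) hands us an equivariant functional extension
$$\rho:(A^n,S)\rightarrow\Big(\bigoplus_{g\in G}A^n,U\Big),\qquad \rho(\xi)=\bigoplus_{g\in G}S_{g^{-1}}(\xi),\qquad \rho^*(\phi)=\tfrac1{|G|}\bigoplus_{g\in G}\phi\circ S_g,$$
whose target carries the shift action $U_h\big(\bigoplus_g\xi_g\big)=\bigoplus_g\xi_{h^{-1}g}$ and the twisted module structure $\big(\bigoplus_g\xi_g\big)\cdot a=\bigoplus_g\xi_g\,\alpha_{g^{-1}}(a)$.

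Next I would exhibit two functional module isomorphisms out of $\big(\bigoplus_{g\in G}A^n,U\big)$, each undoing the module twist in one of the two available ways. Let $\lambda$ be the permutation representation of $G$ on $\C^{|G|}$ with basis $(\delta_g)_{g\in G}$ and $\lambda_h(\delta_g)=\delta_{hg}$, and put $\tilde\alpha:=\bigoplus_{i=1}^{n}\alpha$, the standard action on $A^n$. The first is
$$\Phi:\Big(\bigoplus_{g\in G}A^n,U\Big)\rightarrow A^{n\cdot|G|}:=\big(A\otimes\C^{n\cdot|G|},\,\alpha\otimes\nu\big),\qquad \Phi\Big(\bigoplus_{g}\xi_g\Big)=\sum_{g\in G}\tilde\alpha_g(\xi_g)\otimes\delta_g,$$
where $\C^{n\cdot|G|}$ is identified with $\C^n\otimes\C^{|G|}$ and $\nu:=\id_{\C^n}\otimes\lambda$ (again a permutation action), and the second is
$$\Psi:\Big(\bigoplus_{g\in G}A^n,U\Big)\rightarrow\big(A^n\otimes\C^{|G|},\,S\otimes\mu\big),\qquad \Psi\Big(\bigoplus_{g}\xi_g\Big)=\sum_{g\in G}S_g(\xi_g)\otimes\delta_g,\qquad \mu:=\lambda.$$
In $\Phi$ the module twist $\alpha_{g^{-1}}$ is cancelled by $\tilde\alpha_g$ on the $g$-th coordinate, in $\Psi$ it is cancelled by $S_g$, so both targets acquire the ordinary (untwisted) module structure, while compatibility of the shift with $\lambda$ makes $\Phi$ and $\Psi$ $G$-equivariant for the indicated actions. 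The functional-space parts $\Phi^*,\Psi^*$ are given by the analogous formulas (for instance, $\Phi^*\big(\bigoplus_g\psi_g\big)$ sends $\bigoplus_g\eta_g\otimes\delta_g$ to $\sum_g\psi_g\big(\tilde\alpha_{g^{-1}}(\eta_g)\big)$), and one verifies routinely that $(\Phi,\Phi^*)$ and $(\Psi,\Psi^*)$ are functional module isomorphisms.

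Finally, set $\pi:=\Phi\circ\rho:(A^n,S)\rightarrow A^{n\cdot|G|}$ and $V:=\Psi\circ\Phi^{-1}:A^{n\cdot|G|}\rightarrow\big(A^n\otimes\C^{|G|},S\otimes\mu\big)$. Then $\pi$ is an equivariant functional extension, being the composite of the functional extension $\rho$ of Lemma \ref{lemma61} with the functional module isomorphism $\Phi$ (a surjective functional homomorphism is a functional extension, see the remark after Definition \ref{def32}), and functional extensions compose by Lemma \ref{lem22}; $V$ is a functional module isomorphism; and $V\circ\pi=\Psi\circ\rho$. A direct computation gives
$$V\circ\pi(\xi)=\Psi\circ\rho(\xi)=\sum_{g\in G}S_g\big(S_{g^{-1}}(\xi)\big)\otimes\delta_g=\xi\otimes\Big(\sum_{g\in G}\delta_g\Big).$$
Since $e_1:=\sum_{g\in G}\delta_g$ spans the trivial subrepresentation of $\mu=\lambda$, completing it to a basis of $\C^{|G|}$ (the $\lambda$-invariant complement being the augmentation subspace) realises $\mu$ in block form with trivial action on $\C e_1$, and $V\circ\pi$ is then exactly the canonical corner embedding of $(A^n,S)$ into this first summand; the factor $1/|G|$ in $\rho^*$ accounts for the dual side.

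The step requiring the most care is the middle one: checking that $(\Phi,\Phi^*)$ and $(\Psi,\Psi^*)$ are functional module isomorphisms, i.e.\ verifying $G$-equivariance for the precise actions $\alpha\otimes\nu$ and $S\otimes\mu$ respectively, and the pairing identities $\Phi^*(\phi)\big(\Phi(\xi)\big)=\phi(\xi)$ and $\Psi^*(\phi)\big(\Psi(\xi)\big)=\phi(\xi)$. This is bookkeeping, tracking where the twist $\alpha_{g^{-1}}$, the permutation on $\C^{|G|}$, and the maps $S_g$ intervene; everything else is formal, since $\rho$ comes from Lemma \ref{lemma61}, isomorphisms are functional extensions, and Lemma \ref{lem22} gives closure under composition.
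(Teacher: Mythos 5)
Your proposal is correct and follows essentially the same route as the paper: your $\pi=\Phi\circ\rho$ and $\Psi\circ\Phi^{-1}$ unwind to exactly the paper's maps $\pi(\xi)=\bigoplus_{g}(\oplus_{i}\alpha_g)S_{g^{-1}}(\xi)$ and $W$, the only cosmetic difference being that the paper modifies the formulas of Lemma \ref{lemma61} directly instead of composing with your untwisting isomorphisms $\Phi,\Psi$. Just fold your final change of basis into the construction (as the paper does with the $G$-equivariant isomorphism $X:(\C^{|G|},\tau)\rightarrow(\C^{|G|},\mu)$ sending $(1,\ldots,1)$ to $(1,0,\ldots,0)$), i.e.\ define $V:=\big(\id_{A^n}\otimes Y\big)\circ\Psi\circ\Phi^{-1}$ and let $\mu$ be the transported action, since with $\mu=\lambda$ literally your $V\circ\pi(\xi)=\xi\otimes\sum_g\delta_g$ is not yet the canonical corner embedding into the first summand.
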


\begin{proof}

{\bf (a)} 
Set  $m:= |G|$.
Going back to Lemma  \ref{lemma61} 
for $\cale:= A^n$, we define the $G$-action $U$ and 
the functional extension $(\pi,\pi^*)$ 
by modifying the formulas appearing in the proof 
of Lemma \ref{lemma61}   
as follows,  
%
%
%
$$ 
U_h \Big (\bigoplus_{g \in G}  \xi_g \Big ) = (\oplus_{i=1}^{nm} 
\alpha_h ) \circ \bigoplus_{g \in G}  \xi_{h^{-1} g}, 
\qquad \Big (\bigoplus_{g \in G}  \xi_g \Big )
\cdot a = \bigoplus_{g \in G}  \xi_g  a , $$
$$\pi(\xi) = \bigoplus_{g \in G} 
(\oplus_{i=1}^n  \alpha_ g 
 ) \circ S_{g^{-1}}(\xi) , 
\qquad 
\pi^*(\phi) = (1/m)  
\bigoplus_{g \in G} \phi \circ S_g \circ (\oplus_{i=1}^n \alpha_{g^{-1}} ) $$ 

for all $\xi, \xi_g \in \cale, h \in G, \phi   \in \Theta_A(\cale), 
a \in A$. 
Then we easily check analogously as in the proof items
(b) and (c) of Lemma \ref{lemma61} that
$(\pi,\pi^*)$ is a functional 
extension. 
We write then $U= \alpha \otimes \nu$ in the above diagram, with $\nu$ obviously 
defined. 

{\bf (b)}  
We 
have then a functional module isomorphism $(W,W^*)$, 
$$W:  \Big ( 
\bigoplus_{g \in G} A^n  ,U
\Big )  
\rightarrow   \Big ( \bigoplus_{h \in G} A^n = A^n \otimes \C^m, S \otimes \tau \Big ) :  
W  \Big (\bigoplus_{g \in G} \xi_g  \Big ) =  $$
$$
= \bigoplus_{g \in G} 
  S_{g} 
\circ (\oplus_{i=1}^n \alpha_{g^{-1}} ) (\xi_g),
\;  
W^*  \Big (\bigoplus_{g \in G} \phi_g  \Big ) =  
m^{-1} 
\bigoplus_{g \in G} 
\phi_g \circ 
(\oplus_{i=1}^n \alpha_g  )  
  \circ S_{g^{-1}}  . $$  

Here, $\tau$ is the shift $G$-action on 
$\oplus_{g \in G} \C$ defined by 
$\tau_k( \oplus_{g \in G} \lambda_g) = \oplus_{g \in G} \lambda_{k^{-1} g}$. 
 
We observe $G$-equivariance of $W$ (and similarly for $W^*$): 
$$W \Big (  U_h  \Big (\bigoplus_{h \in G} \xi_h  \Big )  \Big ) 
= \bigoplus_{g \in G} 
  S_{h h^{-1} g} 
\circ (\oplus_{i=1}^n \alpha_{g^{-1} h} ) (\xi_{h^{-1} g}) 
= \big (S_h \otimes \tau_h \big ) \Big ( W  \Big (\bigoplus_{h \in G} \xi_h  \Big )  
\Big ) . $$

\if 0
$$
W^*  \Big (\bigoplus_{g \in G} \phi_g  \Big ) =  
m^{-1} 
\sum_{g \in G} 
\phi_{h^{-1} g}  \circ  
 \oplus \alpha_{h^{-1}} \circ 
(\oplus_{i=1}^n \alpha_g  )  
  \circ S_{g^{-1}}  (\xi_g) . $$  
\fi 

\if 0
$$W^*  \Big (\bigoplus_{g \in G} \phi_g  \Big ) =  
\alpha_{h}
m^{-1} 
\sum_{g \in G} 
\phi_g \circ 
(\oplus_{i=1}^n \alpha_{g^{-1}} ) 
  \circ S_{g^{-1}} ( \oplus \alpha_{h^{-1}} \xi_{h g}) . $$  
\fi 

{\bf (c)}  
Choose a $G$-equivariant linear 
isomorphism $X : (\C^m, \tau) \rightarrow (\C^m,\mu)$ 
which sends $(1,1,\ldots,1)$ to $(1,0,\ldots,0)$ 
($\mu$ defined by $X$ and $\tau$, say). 
Finally, 
setting  $V:=  W \circ (\id_{A^n} \otimes X)$, observe that 
$V \circ \pi$ is the canonical corner embedding.  
\end{proof}

\if 0
{\bf (b)}  
Choose 
the standard orthonormal base 
$(e_g)_{g \in G}$
of the standard Hilbert space $\oplus_{g \in G} \C$, 
that is $e_{g,h} = 1$ iff $g=h$.  

\if 0
Choose an 
orthonormal base 
$(e_g)_{g \in G}$
of the standard Hilbert space $\oplus_{g \in G} \C$, where 
$e_e$ is prescribed to be $(1,1, \ldots, 1)$. 
\fi 

By modifying $\pi$, for each $h \in G$ 
define a non-equivariant module homomorphism 
$$\pi_h : \big  (A^n, S \big ) \rightarrow \big (A^{n m}, U  \big )   :
\pi_h(\xi) = \bigoplus_{g \in G}  e_{h,g}
\; (\oplus \alpha_ g 
 ) \circ S_{g^{-1}}(\xi) , $$

and similarly $\pi_h^*: \Theta_A(A^n)
\rightarrow \Theta_A(A^{n m})$ is derived from the formula of $\pi^*$ 
by adding the factor $e_{h,g}$ 
analogously. 


\if 0
We observe that 
the $\pi_h$s  (and similarly so the $\pi_h^*$s) 
are $G$-equivariant:  
$$U_k \big (\pi_h(\xi) \big ) 
= \bigoplus_{g \in G}   
e_{h,g} \;  
(\oplus \alpha_ { k k^{-1} g}  ) \circ S_{g^{-1} k}(\xi)  
= \pi_h \big (S_k(\xi)  \big )  
$$
\fi 

{\bf (c)} 
We get thus  a functional module isomorphism $(W,W^*)$, 
$$W:  \Big ( \bigoplus_{h \in G} A^n = A^n \otimes \C^m, S \otimes \tau \Big ) \rightarrow \big (A^{n m},U
\big ) : 
W  \Big (\bigoplus_{h \in G} \xi_h  \Big ) = \sum_{h \in G } \pi_h(\xi_h), $$
$$ W^* 
\Big (\bigoplus_{h \in G} \phi_h \Big ) 
= \sum_{h \in G} \pi_h^*(\phi_h),$$ 
thereby observing 
the functional module homomorphism property  
$$ W^* \Big (\bigoplus_{h \in G} \phi_h  \Big )
\Big (W  \Big (\bigoplus_{k \in G} \xi_k  \Big ) \Big )  
= \sum_{h,k \in G} \pi_h^* \big (\phi_h \big ) 
\big (\pi_k (\xi_k) \big ) 
= \sum_{h,k \in G}  \langle e_k,e_h \rangle \cdot 
\phi_h(\xi_k)     $$
$$= \sum_{h \in G} \phi_h(\xi_h)  = 
\Big (\bigoplus_{h \in G} \phi_h \Big ) \Big (\bigoplus_{k \in G} \xi_k \Big )  . $$
 
Here, $\tau$ is the shift $G$-action on 
$\oplus_{g \in G} \C$ defined by 
$\tau_k( \oplus_{g \in G} \lambda_g) = \oplus_{g \in G} \lambda_{k^{-1} g}$. 
 
We observe $G$-equivariance of $W$ (and similarly for $W^*$): 
$$U_k \Big (  W  \Big (\bigoplus_{h \in G} \xi_h  \Big )  \Big ) 
= \sum_{h \in G } \bigoplus_{g \in G}   
e_{h, k^{-1} g} \;  
(\oplus \alpha_ { k k^{-1} g}  ) \circ S_{g^{-1} k}(\xi_h)  $$
$$
= \sum_{h \in G } \pi_{kh} \big (S_k(\xi_h)  \big )  
=  W  \Big (\bigoplus_{h \in G} S_k(\xi_{k^{-1} h})  \Big )
$$  

 Composing $W$ with the 
bijective function 
$\sigma(\oplus_{g \in G}  \xi_g) = \bigoplus_{g \in G} 
  S_{g} 
\circ (\oplus \alpha_{g^{-1}} ) (\xi_g)$ 
acting on   $A^{n m}$,  
we can quickly see 
by elementary linear algebra that $W \circ \sigma$  and thus $W$  
are bijective.  
We finally set $V:= W^{-1}$ in the 
above diagram.  

{\bf (c)} 
Note that $W$ restricted to the first summand is exactly 
$\pi$, so $W^{-1} \circ \pi$ is the 
claimed canonical identity embedding. 
\fi 

\if 0
$$\bigoplus_{h \in G} 
\bigoplus_{g \in G} 
(-1)^{\delta(h,g)} (\oplus \alpha_ g 
 ) \circ S_{g^{-1}}(\xi) ,  $$
 \fi
%

\if 0
wee $G$ kompakt:

corollary, kann corner emebddings in $M_n(A)$ mit bel
aktion in $M_m(A)$ mit very special aktion einbetten,
denn letztes lemma.(a)
\fi

\if 0
The next corollary shows that for finite groups $G$,
canonical corner embeddings $(A,\alpha) \rightarrow \big (M_n(A), 
\ad(S \oplus \alpha) \big )$ 
with {\em arbitrary} $G$-actions $S$ 
are invertible in $\Lambda$. 
\fi 

The next corollary shows that for finite groups $G$,
canonical corner embeddings $(A,\alpha) \rightarrow (M_n(A),\Gamma)$ 
with {\em arbitrary} $G$-actions $\Gamma$ 
are right-invertible and usually even invertible in $\Lambda$. 

\if 0 
!!!!!!!!!!
The next corollary shows that for finite groups $G$,
canonical corner embeddings $(A,\alpha) \rightarrow (M_n(A),\Gamma)$ 
with {\em arbitrary} $G$-actions $\Gamma$ 
are invertible in $\Lambda$. 
(They are formally more general than the 
format defined in Definition \ref{def23}.) 
\fi 

\begin{corollary}		\label{cor51} 

Let $G$ be a finite group.    
Given a corner embedding $e$ as indicated in this diagram,
$$\xymatrix{ 
   \big ( M_n(A) , \Gamma  \big )   \ar[r]^-f    
&  \big (  M_{n m} \otimes A ,   
\ad(\nu)  \otimes 
\alpha   \big )  
\ar[d]^y 
\ar[r]^-x  &  \big (  M_{m}  \otimes M_{n} ( A)  ,   
\mu  \otimes \Gamma 
\big )       \ar[d]^z 
    \\
\big ( A,\alpha \big ) \ar[r]^-F   \ar[u]^e    
& 
\big (  M_{n m + 1} \otimes A  ,   
\ad( \nu' ) \otimes \alpha 
\big )  
\ar[r]^-X
&
\big (  
M_{n m + 1} \otimes A  ,   
\ad(S) \big )
}$$

there exists a non-canonical corner embedding $f$ 
(invertible in $\Lambda$), 
algebra isomorphisms $x$ and $X$, 
injective algebra homomorphisms 
$y$ and $z$,  
and a 
canonical corner embedding 
$F$ as indicated in the above diagram,  
such that 
$f x$ is the trivial canonical corner embedding 
and   
$e fy$ is homotopic to $F$. 


In particular, $e$ is right invertible  in $\Lambda$ 
since $ e \cdot fy F^{-1} = \id_A$.  
  
Moreover, $\Gamma = \ad(\gamma)$ for an  
$(A,\alpha)$-module $G$-action $\gamma$ on $A^{n}$. 

If $\Gamma = \ad(\gamma \oplus \alpha)$ 
for any $G$-action $\gamma$ on $A^{n-1}$,  then 
the canonical corner embedding 
$e$ 
is even invertible in $\Lambda$.  

\if 0
In particular, $e$ is invertible  in $\Lambda$ 
with inverse morphism $e^{-1} = fy F^{-1}$.  
\fi

\end{corollary}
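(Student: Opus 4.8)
The plan is to read off full invertibility in the split case directly from Proposition~\ref{prop22}, and right invertibility in general by pushing the functional extension of Lemma~\ref{lemma62} through Lemma~\ref{lemma31} in order to assemble the displayed diagram. \emph{Split case first:} suppose $\Gamma=\ad(\gamma\oplus\alpha)$ with $\gamma$ a $G$-action on $A^{n-1}$. Then $(A^{n-1},\gamma)$ is a functional $(A,\alpha)$-module, and Lemma~\ref{lemma62} applied to it supplies an equivariant functional extension $(A^{n-1},\gamma)\to A^{(n-1)|G|}=(A\otimes\C^{(n-1)|G|},\alpha\otimes\nu)$, which is a functional extension into $B^{N}$ with $B=A$, $\beta=\alpha$. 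Hence Proposition~\ref{prop22} applies verbatim and gives that the corner embedding $e\colon(A,\alpha)\to\calk_A\big((A^{n-1}\oplus A,\gamma\oplus\alpha)\big)=(M_n(A),\Gamma)$ is invertible in $\Lambda$. It remains to treat an arbitrary $\Gamma$, where we only aim at right invertibility.

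\emph{Building the diagram.} A $G$-action on the algebra $\calk_A(A^n)=M_n(A)$ for which the corner embedding $e$ is equivariant has a $G$-invariant ``first coordinate support'' and restricts, along the standard Morita bimodule, to a module $G$-action $\gamma$ on $A^n$ with $\Gamma=\ad(\gamma)$; so we write $\Gamma=\ad(\gamma)$. Put $m:=|G|$ and apply Lemma~\ref{lemma62} to $(A^n,\gamma)$: we obtain an equivariant functional extension $\pi\colon(A^n,\gamma)\to A^{nm}=(A\otimes\C^{nm},\alpha\otimes\nu)$ and a functional module isomorphism $V\colon A^{nm}\to(A^n\otimes\C^m,\gamma\otimes\mu)$ such that $V\pi$ is the canonical corner embedding onto the first, $G$-invariant, summand. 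By Lemma~\ref{lemma31}, $\pi$ induces an injective algebra homomorphism and $V$ (being surjective, hence a functional extension with functional-extension inverse) an algebra isomorphism; after the identifications $\calk_A((A^n,\gamma))=(M_n(A),\Gamma)$, $\calk_A(A^{nm})=(M_{nm}\otimes A,\ad(\nu)\otimes\alpha)$ and $\calk_A((A^n\otimes\C^m,\gamma\otimes\mu))=(M_m\otimes M_n(A),\mu\otimes\Gamma)$ coming from \re{eq19}, these are exactly the maps $f$ and $x$ of the diagram. Since $\calk_A(-)$ respects composition (Lemma~\ref{lem22} together with the formula of Lemma~\ref{lemma31}), $fx=\calk_A(V\pi)$ is the corner embedding $k\mapsto e_{11}\otimes k$, i.e.\ the trivial canonical corner embedding; in particular $f$ is a corner embedding up to the isomorphism $x$, hence invertible in $\Lambda$.

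\emph{Stabilizing and rotating.} Set $\nu':=\nu\oplus\triv_\C$, let $y$ be the canonical equivariant corner embedding $(M_{nm}\otimes A,\ad(\nu)\otimes\alpha)\to(M_{nm+1}\otimes A,\ad(\nu')\otimes\alpha)$ adjoining one coordinate, $F$ the canonical corner embedding of $(A,\alpha)$ onto that new coordinate --- a corner embedding into a matrix algebra with product $G$-action, hence invertible in $\Lambda$ --- and choose the isomorphism $X$ rearranging coordinates (one may take $\ad(S):=\ad(\nu')\otimes\alpha$, $X=\id$) and $z:=X\circ y\circ x^{-1}$, so that the right square commutes. Then $e\cdot fx\colon a\mapsto e_{11}\otimes e(a)$ is an equivariant corner embedding onto the corner cut out by the $G$-invariant rank-one idempotent $e_{11}\otimes E_{11}\otimes 1$ (the first factor is $\mu$-invariant by Lemma~\ref{lemma62}, the second is the $G$-invariant support of $e$), so $efy=y\circ x^{-1}\circ(e\cdot fx)$ is an equivariant corner embedding of $(A,\alpha)$ onto the corner of the $G$-invariant rank-one idempotent $q$ obtained by pushing that idempotent through the equivariant maps $x^{-1},y$. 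Since $q$ and the support $E_{nm+1,nm+1}\otimes 1$ of $F$ are orthogonal $G$-invariant rank-one idempotents with $G$-equivariantly isomorphic corners, both $\cong(A,\alpha)$ and in a way compatible with $efy$ and $F$, an ordinary rotation homotopy built from $q$, $E_{nm+1,nm+1}\otimes 1$ and a $G$-invariant element implementing that isomorphism (which exists by an averaging argument using the approximate unit of $A$) is $G$-equivariant and joins $efy$ to $F$. Thus $efy=F$ in $\Lambda$, and consequently $e\cdot(fyF^{-1})=(efy)F^{-1}=FF^{-1}=\id_A$, i.e.\ $e$ is right invertible.

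\emph{Main obstacle.} The only genuinely non-formal point is the $G$-equivariance of this concluding rotation homotopy: one must check that the rank-one idempotent onto which $efy$ deposits $A$ is $G$-invariant (so the rotation matrix lies in the $G$-fixed subalgebra and $\ad$ of it stays equivariant), and that the two corners are joined by a $G$-invariant partial isometry. This rests precisely on the extra assertion of Lemma~\ref{lemma62} that $V\pi$ lands in a $G$-invariant summand whose distinguished coordinate carries the trivial action, together with the automatic $G$-invariance of the support of the equivariant corner embedding $e$; everything else is bookkeeping with the functor $\calk_A(-)$ and with the product $G$-actions.
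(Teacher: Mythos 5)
Your proposal is correct and follows essentially the same route as the paper: the split case is exactly Lemma \ref{lemma62} fed into Proposition \ref{prop22}, and in the general case you take $\gamma$ to be the restriction of $\Gamma$ to the first column, build $f$ and $x$ from the functional extensions of Lemma \ref{lemma62} via Lemma \ref{lemma31}, identify $fx$ with the canonical corner embedding, and finish by rotating $efy$ onto the added corner; your choices $X=\id$, $z=x^{-1}y$ versus the paper's $X$ induced by $V\oplus\id_A$ are only bookkeeping. One caveat: the concluding homotopy should simply be the paper's ordinary rotation homotopy, i.e.\ conjugation by scalar rotation matrices in the $2$-plane spanned by the two $G$-fixed coordinate lines (the $\mu$-fixed first coordinate supplied by Lemma \ref{lemma62} tensored with the invariant support of $e$, and the newly added coordinate); the auxiliary ``$G$-invariant partial isometry obtained by averaging with the approximate unit of $A$'' you invoke is neither available in this non-unital, purely algebraic setting nor needed, since the scalar rotation already commutes with the permutation-type action $\nu'$ -- which is exactly the equivariance point you rightly single out as the crux.
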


\begin{proof}

{\bf (a)} 
Let $A^n$ be the first column of $M_n(A)$ 
and regard it as an ordinary (non-equivariant at first) right functional $A$-module. 
By 
\cite[Lemma 3.2]{gk2},  $A^n$ is invariant under the $G$-action $\Gamma$,  
so 
is endowed with 
the functional module action 
$\gamma :=\Gamma|_{A^n}$,
and one has   an algebra isomorphism 
$\big (\calk_A 
(A^n 
), \ad(\gamma)  
\big )  \cong \big (M_n(A), \Gamma \big )$
\if 0
algebra isomorphisms 
%
\be{eq30}
\calk_A \Big ( \big (A^n,S \big ) \Big ) = \Big (\calk_A \big (A^n \big ), \ad(S)
\Big )  \cong \Big (M_n(A), \Gamma \Big ) 
\en 
\fi 

{\bf (b)} 
Note that we would already get invertibility 
of the corner embedding $(A,\alpha) \rightarrow 
 \big (M_{n+1}(A), \ad( \gamma \oplus \alpha) \big )$ here by 
Lemma \ref{lemma62} (functional extension) 
and  
Proposition \ref{prop22} (invertibility).  
For arbitrary given 
$\gamma$ this also proves the last claim of 
this corollary.  

\if 0
verwnede nun direkt die erste prop

fe $S \oplus \alpha^{n-1}$ nach prop

$A \rightarrow (M_{2n}(A) ,\ad(S \oplus \alpha^{n}) 
= M_2( M_n(A) , \ad(\Gamma \oplus \alpha^n))$
zweites corner 
\fi 

{\bf (c)}  
Let us use the last isomorphism as an identification  
 throughout the proof, also for other matrices, 
recall \re{eq19},  
for simplicity of the arguments. 
\if 0
Let us identify 
 throughout the proof 
the algebras in \re{eq30}, also for other matrices, 
for simplicity of the arguments. 
\fi 
The 
homomorphism $f$ and isomorphisms $x$ and $X$  of the above diagram  
are the homomorphisms  
according to Lemma  \ref{lemma31}  
associated to the 
functional extensions $\pi$,  
$V$  
and 
$$V \oplus \id_A: \Big (A^{n m +1 }, 
\alpha \otimes \nu  \oplus \alpha \Big  ) 
\rightarrow   
 \Big ( A^{n m +1}  ,   
\gamma \otimes \mu \oplus \alpha \Big )$$  
 of Lemma  
\ref{lemma62},   
respectively.  

Define $y$ and $z$ above as the ordinary `corner embeddings' 
by trivially enlarging the sizes of the matrices of 
their domain by one. 
One defines the $G$-actions $\nu':= \nu \oplus \triv_\C$ 
and $S := \gamma \otimes \mu \oplus \alpha$. 

{\bf (d)} 
Since $V \circ \pi$ is the trivial embedding 
onto the first summand by  Lemma \ref{lemma62}, 
$fx$ is the 
canonical corner embedding. 

Hence $efx$ is the canonical corner embedding, and  
we can rotate $ef x z$ by an ordinary rotation homotopy 
to the 
canonical corner embedding $r$ pointing to the new added corner $(A,\alpha)$. 
Thus for $F$ being the canonical corner embedding, 
$F = r X^{-1}$ 
and so   
\if 0
Thus $F:= r X^{-1}$ is the required 
canonical  
corner embedding, 
as 
\fi 
$e f y  = e f y X X^{-1} = e f x z X^{-1} \sim r X^{-1} = F$
(homotopic). 
 \end{proof}

\if 0
Now $ef x$ is a caonical corner emebdding, that means 
its image is a canoncial corner of the matrix $M_{n m}(A)$, 
necessariyl with $G$-action $\alpha$. 
 Hence we can rotatate $ef x z$ by an ordinary rotation homotopy 
to the new added corner $(A,\alpha)$, 
\fi

\if 0
Similarly we have a 
'corner ebedding' 
$z:(M_m(M_n(A)), \nu \otimes T) 
\rightarrow (M_{mn+1}(A), \nu \otimes T \oplus A)$


By Lemma \ref{lemma61}.(a)
there are 
three functinal extensions 
$f:(A^n,T) \rightarrow (A^{n \cdot |G|}, \alpha \otimes \nu)$ 
and $g: (A^n,T) \rightarrow (A^{n \cdot |G|}, U)$
and $h: (A^n,T) \rightarrow (A^{n \cdot |G|}, U)$, the first for $\pi$,
the second for $V \circ \pi$, 
By Lemma \ref{lemma61}.(a) 
there is a functional extension $(A^n,T) \rightarrow (A^{n \cdot |G|} , 
U)$,
where $U$ is some shift. 
By lemma 
\ref{lemma31}  this exension induses a homomorphism  

$$\big ( M_n(A) ,S \big ) 
\cong  \calk_{(A,\alpha)}\big ((A^n , T) \big) 
\rightarrow  \calk_{(A,\alpha)}\big ( (A^{n \cdot |G|},U)  \big)
 \cong 
\big ( M_{n \cdot |G|} \otimes A ,  \ad(U)  
\big )  $$
where the 
first isomorphism is by lemma  

\if 0
$f$ scheint keiner kormeer einebttung zu sein, 
denn wenn mal allr aktioen trivial annimt,
scheint $1$ im source zu $1$ (mal $|G|$) zu gehen

scheint aber 
mittelungsabb zu sein 
(summe funktionale)

zudem geht $1 \oplus 0 ... \oplus 0$  völlig auf 1
\fi 

vllt $ef$ homotopic zu korner einebettung, 
jedoch shift aktion, also in $A$ bleibt nicht stehen,
ev matix erweitern --> homotopy mit aktioon wechesel 

shift aktion homotopic u triv aktion in 
$M_n$
 
() 

$y$ is the canonical corner emebdding. 
$\gamma = \ad(\nu)$,  $\Gamma = \ad(\nu \oplus \alpha )$

$$\xymatrix{   
\big ( M_n(A) ,S \big )    \ar[r]  
& 
\calk_{(A,\alpha)}\big ((A^n , T) \big)    
\ar[r]  
& 
\calk_A   \big ( ( A^{nm} , S \otimes triv)  \big )    
\ar[d]    \\
(A, \alpha)   \ar[u]  &
\big (  M_{n m + 1} \otimes A ,   \Gamma \otimes \alpha \big )  
& 
  \calk_A   \big ( ( A^{nm}  
\oplus A  , S \otimes triv \oplus \alpha )  \big  )   
\ar[l]   
}$$

The last moduel is isomorph to $(A^{nm +1}, (\gamma \oplus \alpha) 
\otimes \alpha)$ 
by the isomorphism $V \oplus \id_A$  
\fi 


\if 0
\begin{corollary}
If $G$ is a compact group, $(\cale,S)$ is equivariant,
but $\cale \rightarrow \calf$ is only a nonequivariant 
functional extension, 
there is an equivariant one $\calf \rightarrow \calf^n$.
\end{corollary}

\section{invertibility of corner emebddings}

\section{}

\begin{lemma}

Consider the smallest class $C$ of functional modules 
that include 

(a) all rings, including the rings $\calk_B(\cale)$ of the modules
$\cale$ of $C$ 
 
(b) is closed under the internal tensor product  
$\cale \otimes_\pi B$, the external tensor products
$\cale \otimes \calf$ and all infinite direct sums
$\oplus_i \cale_i$,
and all modules $\calf \cdot M_B$ 
(starting from $\cale,\calf \in C$) 
as described in lemma \ref{lemma51}. 

Then all rings $\calk_B(\cale)$ have an appporimate unit
 and all corner emebddings 
$B \rightarrow \calk_B(\cale)$ are invertible 
 
\end{lemma}
\fi

\section{conclusions}
\label{sec6}

In  this section, 
let us summarize and draw the conclusions 
from our findings in this note.  
\if 0
Recall functional 
extensions and the category $\Lambda$ from 
Definitions \ref{def32} and \ref{def38}, 
respectively. 
\fi 


\if 0
$$ (\lambda \mu \otimes k l)
 (\xi \phi(u) \otimes \eta \psi (v))$$
$$ \lambda \mu(\xi) \phi(u) \otimes \eta \psi (v)$$ 
 \fi

\if 0
\section{}

$U:\cale_1 \rightarrow \cale_2,  V: \calf_1 \rightarrow \calf_2$
fep, then

$ U \otimes V: \cale_1 \otimes_\pi \calf_1 \rightarrow \calf_2 
\otimes_\pi \calf_2$ 
fep 

müsste nicht $\pi(A)$ mit $V$ kommutieren

vllt wenn $V$ id

The following lemma has on obvious short straightfoward proof: 

\begin{lemma}

If $U:\cale_1 \rightarrow \cale_2,  V: \calf_1 \rightarrow \calf_2$ 
have the functional extension property then so
$ U \otimes V: \cale_1 \otimes \calf_1 \rightarrow \calf_2 
\otimes \calf_2$,  
and
$U \otimes \id : \cale_1 \otimes_\pi \calf_1 \rightarrow \calf_1 
\otimes_\pi \calf_1$, provided 
$\pi:A \rightarrow \call_B(\calf_1)$ is a homomorphism.

\end{lemma}
\fi

\begin{theorem}
\label{thm71}

Let $C$ be the class of all those cofull functional modules $ \big (\cale, 
\Theta_A(\cale) \big )$ (both cofull)  
over 
discrete algebras $(A,\alpha)$ 
having 
approximate units, such that 
$\calk_A(\cale)$ has also an 
approximate unit and  
$\cale$ 
a 
functional extension into 
some $A^n = \big (A \otimes \C^n, \alpha \otimes \mu \big )$
($n$ any cardinality). 

Then $C$ contains 
all 
plain modules 
$A^n = \big (A \otimes \C^n, \alpha \otimes \mu \big )$ 
- and if $G$ is a finite group, 
even all plain modules $(A^n, \gamma)$ 
with arbitrary $G$-action $\gamma$ - 
with $A$ having 
an approximate unit.  
Moreover, $C$ is closed under taking direct sums $\bigoplus_{i \in I} \cale_i$ 
($I$ any set), 
the internal tensor products $\cale \otimes_\pi B$ 
(change of coefficient algebra) 
for any algebra homomorphism $\pi:A \rightarrow B$ 
with $B$ having 
an approximate unit, 
the external tensor products $\cale \otimes \calg$, 
and the corner modules $\calf \cdot M_A$ 
appearing in Lemma \ref{lemma51}   
for all $A$-modules 
$\cale_i,\cale$, $B$-modules $\calg$ and $\calk_A(\cale \oplus A)$-modules 
$\calf$  
in $C$.

\end{theorem}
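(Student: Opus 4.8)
The plan is to read the theorem as a list of closure properties for the class $C$ and to establish each of them by assembling the lemmas of Sections \ref{sec2}--\ref{sec5}, with no genuinely new construction involved. Membership of $\big(\cale,\Theta_A(\cale)\big)$ in $C$ unwinds into three requirements: (i) the coefficient algebra $A$ has an approximate unit and both $\cale$ and $\Theta_A(\cale)$ are cofull; (ii) $\calk_A(\cale)$ has an approximate unit; (iii) there is a functional extension $\cale \rightarrow A^n$ for some cardinality $n$. For each construction appearing in the statement I would verify (i), (ii), (iii) in that order; the functional extensions demanded by (iii) are produced directly by the relevant lemma, cofullness in (i) is checked by inspection of the defining formulas, and (ii) is the condition that needs the most care.

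For the plain modules, $(\id,\id)$ is a functional extension $A^n \rightarrow A^n$ (a surjective functional homomorphism is automatically a functional extension, cf.\ the remarks after Definition \ref{def32}); cofullness of $A^n$ and of $\Theta_A(A^n)\cong A^n$ comes from $A$ being quadratik, and $\calk_A(A^n)\cong M_n(A)$ inherits the block-diagonal approximate unit from that of $A$. For a finite group $G$ and an arbitrary $G$-action $\gamma$ making $A^n$ into a (plain) functional $A$-module, I would invoke Lemma \ref{lemma62}: it supplies the functional extension $(A^n,\gamma)\rightarrow A^{n\cdot|G|}$ needed for (iii), while (i) and (ii) concern only the underlying $A$-module $A^n$ and its functional space $A^n$ (cf.\ also the proof of Corollary \ref{cor51}), hence are unaffected by the change of $G$-action.

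The four stability statements are then handled as follows. For direct sums, Lemma \ref{lem22} turns functional extensions $\cale_i\rightarrow A^{n_i}$ into a functional extension $\bigoplus_i\cale_i\rightarrow A^{\sum_i n_i}$, giving (iii), and cofullness passes to direct sums by inspection; for the approximate unit of $\calk_A\big(\bigoplus_i\cale_i\big)$ in (ii) I would restrict to a finite subsum, use cofullness of each $\cale_i$ and each $\Theta_A(\cale_i)$ to rewrite an off-diagonal elementary operator $\theta_{\xi,\phi}$ as $\sum_l\theta_{\eta_l,\psi_l}\circ\theta_{\zeta_l,\phi}$ (and symmetrically on the other side), and thereby check that a block-diagonal element assembled from approximate-unit elements of the $\calk_A(\cale_i)$ acts as a two-sided local unit. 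For the internal tensor product $\cale\otimes_\pi B$, Lemma \ref{lemma41} gives (iii) and Lemma \ref{lemma42} in its two-sided form gives (ii), all their hypotheses being contained in (i)--(ii) for $\cale$ together with $B$ having an approximate unit; cofullness of $\cale\otimes_\pi B$ and of $\Theta_B(\cale\otimes_\pi B)$ is obtained by transporting the cofullness presentations across $\otimes_\pi$ using the approximate unit of $B$. For the external tensor product $\cale\otimes\calg$ over $A\otimes B$, Lemma \ref{lem22} together with the identification $A^n\otimes B^m\cong(A\otimes B)^{nm}$ yields (iii), while $u_\lambda\otimes v_\mu$ and products of approximate-unit elements of $\calk_A(\cale)$ and $\calk_B(\calg)$ furnish the approximate units for (i) and (ii), using $\theta_{\xi\otimes\eta,\phi\otimes\psi}=\theta_{\xi,\phi}\otimes\theta_{\eta,\psi}$. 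Finally, for the corner module $\calf\cdot M_A$: since $\cale\in C$ and $A\in C$ we already have $\cale\oplus A\in C$ by the first two cases, so $K:=\calk_A(\cale\oplus A)$ has an approximate unit and ``$\calf\in C$ over $K$'' is meaningful; the functional extension $\cale\rightarrow A^{n-1}$ of (iii) together with $\id_A$ gives the direct-sum functional extension $\cale\oplus A\rightarrow A^n$ of the form required by Lemma \ref{lemma51}, and with $\calf\rightarrow K^m$ that lemma produces the functional extension $\calf\cdot M_A\rightarrow A^{nm}$ of (iii); for (ii) one uses $\calk_A(\calf\cdot M_A)\cong\calk_K(\calf)$ from \cite[Proposition 8.1]{gk}, and (i) is read off the definitions in the proof of Lemma \ref{lemma51}.

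The main obstacle is condition (ii) for the two constructions not covered by a dedicated lemma, namely direct sums and external tensor products: there the approximate unit of the compact operator algebra has to be produced by hand, and the direct-sum case is where a genuine (if short) computation is unavoidable, since that is precisely where cofullness of $\cale$ and of $\Theta_A(\cale)$ is used to absorb the off-diagonal elementary operators. Everything else is either a direct citation of Lemmas \ref{lem22}, \ref{lemma31}, \ref{lemma41}, \ref{lemma42}, \ref{lemma51}, \ref{lemma62} and Proposition \ref{prop22}, or routine cofullness bookkeeping.
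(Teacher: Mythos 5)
Your proposal is correct and follows essentially the same route as the paper's proof: plain modules via the identity functional extension and the evident approximate unit of $M_n(A)$, direct sums and external tensor products via Lemma \ref{lem22} (with $\calk(\cale\otimes\calg)\cong\calk(\cale)\otimes\calk(\calg)$ supplying the approximate unit), change of coefficient algebra via Lemmas \ref{lemma41} and \ref{lemma42}, corner modules via Lemma \ref{lemma51} together with $\calk_{\calk_A(\cale\oplus A)}(\calf)\cong\calk_A(\calf\cdot M_A)$ from \cite[Proposition 8.1]{gk}, and arbitrary $G$-actions for finite $G$ via Lemma \ref{lemma62}. The only difference is that you spell out the cofullness bookkeeping and give an explicit cofullness-based construction of the approximate unit of $\calk_A\big(\bigoplus_i\cale_i\big)$, details which the paper's proof leaves implicit.
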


\begin{proof}

Clearly $C$ contains the plain modules by 
trivially having 
the identity functional extension $A^n \rightarrow A^n$ available  
and an obviously constructable 
approximate unit in
$\calk_A(A^n)  \cong M_n(A)$.
The claims  with direct sums and external tensor products 
are verified by Lemma \ref{lem22}. 
 \if 0
For the canonically defined external tensor product
$\cale \otimes \calf$ of functional modules see 
\cite{gk}[Definition 2.10]. 
The functional extensions for $\cale_1 \otimes \cale_2$ 
and $\bigoplus \cale_i$ are trivially constructed 
canonically as $\big (U_1 \otimes U_2, U_1^* \otimes U_2^*  \big )$ 
and $\big ( \bigoplus U_i, \bigoplus_i U_i^* \big )$.
\fi 
Thereby note that 
an approximate unit of 
$\calk(\cale \otimes \calg)$ is evident by the isomorphism
$\calk(\cale \otimes \calg) 
\cong \calk(\cale) \otimes \calk(\calg)$ 
defined by 
$
\theta_{x \otimes y,\phi \otimes \psi}
\mapsto \theta_{x,\phi} \otimes  \theta_{y,\psi }$. 
The rest we have checked   
in Lemmas \ref{lemma41}, \ref{lemma42},  
\ref{lemma51}  
and \ref{lemma62}.   
We need to remark that by the algebra isomorphism 
$\calk_{\calk_A(\cale \oplus A)}(\calf ) \cong \calk_A(\calf \cdot M_A)$ 
of \cite[Proposition 8.1]{gk}, 
the latter algebra has an approximate unit. 
\end{proof}

\if 0
\begin{corollary}

If we allow in the paper only the ordinary 
functional modules $A^n$ and the ordinary 
corner emebeddings, then the theory is unchanged 
if we incoorporate all functional modules from 
$C$ and their associated corner embeddings. 

Thus all results, including Morita equivalence for modules, 
decent, faithful functor... 
from $C$ remain correct  in very special $GK^G$-theory. 
Excpetion: every other basic result concerning internal
general tensor produkt

\end{corollary}
\fi

We define the class $C_\Lambda$ as in the last theorem, 
but where the cardinalities $n$ 
are only allowed to have the cardinalities 
$n$ in the category $\Lambda$ 
as explained 
in Definition \ref{def28} . 
 
In \cite{gk} we have considered the 
universal {\em splitexact}, {\em stable} (with respect 
to generalized corner embeddings $A \rightarrow \calk_A(\cale \oplus A)$) and {\em homotopy invariant} 
category 
$GK^G$-theory (Generators and relations $KK$-theory with $G$-equivariance),     
`derived' from the category  of  quadratik rings or {\em algebras}, 
conveniently defined by generators 
and relations. 
It is possible to view 
$GK^G$-theory for algebras 
as the complete analogy  
to $KK^G$-theory 
for $C^*$-algebras  
by the universal property of $KK$-theory 
established by Higson \cite{higson},
or see for instance 
\cite{bgenerators,baspects}. 
If one considers the variant of $GK^G$-theory where 
only the most canonical matrix embeddings 
$(A,\alpha) \rightarrow \big (   
M_n \otimes A ,\gamma \otimes \alpha 
\big )$ 
are axiomatically declared  to be invertible
(the ``stability" item above), then this $GK^G$-theory 
is called {\em very special} in \cite{gk2}.   
Note that each algebra with either a right, left or two-sided approximate unit is trivially
quadratik.



We will apply our findings of this note also to 
very special $GK^G$-theory
next:

\begin{corollary}

In $\Lambda$, all corner embeddings $A \rightarrow \calk_A( \cale \oplus A)$ 
for modules $\cale$ taken from class $C_\Lambda$ 
are invertible.  

If $G$ is a finite group, also all corner embeddings $A \rightarrow  (M_n(A),\Gamma  )$ 
with arbitrary $G$-actions $\Gamma$  
have a right inverse in $\Lambda$, and
all canonical corner 
embeddings $(A,\alpha)  
\rightarrow \calk_A\big ((A^n , S \oplus \alpha ) \big )$ 
with arbitrary $G$-actions $S$ an inverse in $\Lambda$. 

\if 0 
the slightly bigger canonical corner embededing $(A,\alpha)  
\rightarrow (M_{n+1}(A),\ad(\gamma \oplus \alpha))$ 
($\Gamma = \ad(\gamma)$) is invertible in $\Lambda$. 
 \fi 


In particular, 
all this holds true if $\Lambda$ is 
very special $GK^G$-theory with 
object class consisting only of discrete algebras over $\C$ having 
approximate units.   

\if 0 
Both in $\Lambda$,  and 
very special $GK^G$-theory with 
object class consisting only of discrete algebras over $\C$ having right approxiamte units,  
all corner embeddings $A \rightarrow \calk_A( \cale \oplus A)$ 
for modules $\cale$ taken from class $C$ are invertible.  
\fi 

Thus all results from the paper \cite{gk}, 
including Morita equivalence 
for functional modules
\cite[Theorem 14.2]{gk} 
in as far as we only allow modules from class $C_\Lambda$ 
in 
functional module Morita equivalence definition \cite[Definition 14.1]{gk}, 
remain correct  in this very special $GK^G$-theory. 

\if 0
However we can only allow functional modules 
from class C in Morita equivalence
for functional modules in \cite[Definition 14.1]{gk}. 
\fi 

\if 0
Thus all results from the papers \cite{gk} 
and \cite{kt}, including Morita equivalence for functional modules
 \cite[Theorem 14.2]{gk} 
in as far we only allow modules from class $C$ 
in \cite[Definition 14.1]{gk}, 
decent functor \cite[Definition 12.2]{gk}, equivalent condition for 
 faithfullness of a  functor 
\cite[Theorem 10.8]{gk}, 
Green-Julg theorem for finite groups \cite[Corollary 16.11]{gk}, 
criterion for injectivity of Baum-Connes map \cite[Proposition 17.6]{gk}, 
and adjoint functor to induction functor \cite[Proposition 15.6]{gk}, 
and $K$-theory in $GK^G$-theory \cite{kth} 
remain correct  in very special $GK^G$-theory. 
 \fi

\end{corollary}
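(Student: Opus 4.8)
The plan is to collate the results of Sections~\ref{sec2}--\ref{sec5}; essentially nothing new needs to be proved, only assembled. For the first assertion: if $\cale\in C_\Lambda$, then by the very definition of $C_\Lambda$ there is a functional extension $\cale\rightarrow A^n$ with $n$ a cardinality occurring in $\Lambda$, and Proposition~\ref{prop22} then yields at once that the corner embedding $A\rightarrow\calk_A(\cale\oplus A)$ is invertible in $\Lambda$. The cardinality restriction built into $C_\Lambda$ is exactly what guarantees that the ordinary matrix corner embeddings used inside the proof of Proposition~\ref{prop22} are themselves invertible in $\Lambda$.

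For a finite group $G$ the second assertion is just a reformulation of Corollary~\ref{cor51}: its first part provides a right inverse of $A\rightarrow(M_n(A),\Gamma)$ for an arbitrary $G$-action $\Gamma$, while, via the identification $\calk_A\big((A^n,S\oplus\alpha)\big)\cong\big(M_n(A),\ad(S\oplus\alpha)\big)$ of \re{eq19}, its last part --- the case $\Gamma=\ad(\gamma\oplus\alpha)$, applied with $\gamma:=S$ a $G$-action on $A^{n-1}$ --- provides an honest inverse of the canonical corner embedding $(A,\alpha)\rightarrow\calk_A\big((A^n,S\oplus\alpha)\big)$. I would record here that the cardinality $n\cdot|G|$ that appears along the way in Lemmas~\ref{lemma62} and~\ref{cor51} is again available in $\Lambda$, being finite when $n$ is and equal to $n$ otherwise.

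For the ``in particular'' clause I would check that very special $GK^G$-theory, taken with object class the discrete $\C$-algebras possessing approximate units, is an instance of the category $\Lambda$ of Definition~\ref{def28}: its morphism sets contain all algebra homomorphisms, the ordinary corner embeddings $(A,\alpha)\rightarrow(M_n\otimes A,\gamma\otimes\alpha)$ are invertible by the ``very special'' stability axiom, and diffeotopy invariance holds by construction. It then remains to see that this object class is ``big enough'' in the sense of Definition~\ref{def28}, i.e.\ closed under the constructions used in this note --- which is precisely the content of Theorem~\ref{thm71} together with Lemmas~\ref{lemma42} and \cite[Proposition 8.1]{gk}, ensuring that $M_n(A)$, $\calk_A(\cale)$, $\calk_B(\cale\otimes_\pi B)$ and $\calk_A(\calf\cdot M_A)$ all inherit approximate units --- and that any algebra with an approximate unit is automatically quadratik, hence a legitimate object of $GK^G$-theory.

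The last clause then follows because the only feature of \cite{gk} that genuinely exploits the axiomatic invertibility of generalized corner embeddings $A\rightarrow\calk_A(\cale\oplus A)$ --- in particular the key lemmas \cite[Lemma 8.3]{gk}, \cite[Corollary 8.2]{gk}, \cite[Lemma 9.5]{gk} and, through them, results such as \cite[Theorem 10.8]{gk} and \cite[Theorem 14.2]{gk} --- is now supplied in very special $GK^G$-theory by the first three assertions, provided the modules involved lie in $C_\Lambda$; and $C_\Lambda$ is closed under exactly the operations (change of coefficient algebra, direct sums, external tensor products, and the corner modules $\calf\cdot M_A$) that those proofs invoke, by Theorem~\ref{thm71}. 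Hence every such result, read with its functional modules restricted to $C_\Lambda$ --- in particular the Morita equivalence \cite[Theorem 14.2]{gk} with \cite[Definition 14.1]{gk} so restricted --- goes through verbatim. I expect the only place requiring real care to be precisely this last bookkeeping: verifying, proof by proof, that every module occurring in the cited arguments of \cite{gk} actually belongs to $C_\Lambda$, so that the closure list of Theorem~\ref{thm71} really does cover every construction used there, and that no step secretly demands a cardinality lying outside $\Lambda$.
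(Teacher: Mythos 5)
Your proposal is correct and takes essentially the same route as the paper: the first assertion from the definition of $C_\Lambda$ in Theorem \ref{thm71} combined with Proposition \ref{prop22}, the second as a restatement of Corollary \ref{cor51}, and the transfer to very special $GK^G$-theory by noting that the generalized corner embeddings appearing in the key arguments of \cite{gk} (notably \cite[Lemma 8.3]{gk}, the descent functor, and the Morita theorem \cite[Theorem 14.2]{gk}) arise only from change of coefficient algebras and direct sums, hence from modules in $C_\Lambda$. The only difference is that the paper explicitly performs the module-membership bookkeeping you defer (pointing out that Lemma 8.3 needs only the change-of-coefficient permanence of Theorem \ref{thm71}, and likewise for \cite[Corollary 12.4]{gk} and \cite[Theorem 14.2]{gk}), though at roughly the same level of detail you indicate.
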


\begin{proof}

The first assertion follows because by the definition of class 
$C_\Lambda$ in Theorem \ref{thm71}, the stated corner embedding
is invertible in $\Lambda$ by Proposition \ref{prop22}. 
The second assertion 
has already been stated in Corollary \ref{cor51}. 

Assume now that $\Lambda$ is $GK^G$-theory \cite{gk} as formulated 
in 
this corollary.  
The paper \cite{gk} uses sometimes the invertibility of generalized 
corner embeddings 
$(A,\alpha) \rightarrow \calk_A \big ((\cale \oplus A, S \oplus \alpha) 
\big )$, and here we need to observe if we have 
them  
still available in 
very special $GK^G$-theory
with only ordinary corner embeddings $(A,\alpha) \rightarrow 
\big (M_n(A) , 
 \delta \otimes \alpha \big )$
declared to be invertible. 
\if 0
A key lemma is \cite[Lemma 9.5]{gk}, which says 
an extended splitexact sequence in $GK^G$ 
can be fused with an algebra homomorphism 
to an extended splitexat sequence in $GK^G$ again. 
This lemma relies on \cite[Lemma 8.3]{gk}, which 
uses a change of coefficiant algebra. 
\fi 

A key lemma is \cite[Lemma 8.3]{gk}, which says
that a homomorphism $\pi$ can `skip' an inverse corner embedding 
$e^{-1}$ by the formula $ e^{-1} \pi = \phi f^{-1}$ in $GK^G$-theory 
for some existing homomorphism $\phi$ and corner 
embedding $f$ 
in a certain necessary way. 
 Here, the generalized corner embedding $f$ uses the change of coefficient
algebra, and by the stability result of Theorem \ref{thm71} 
under such an operation, $f$ is invertible and  thus \cite[Lemma 8.3]{gk}
valid. 

Building on 
that lemma \cite[Lemma 8.3]{gk}, 
\cite[Lemma 9.3]{gk}  
and thus the results of \cite[Section 10]{gk},  
and 
in particular 
\cite[Therem 10.9]{gk} (about injectivity of functors on $GK^G$-theory) 
and \cite[Proposition 17.6]{gk} (about injectivity of the Baum-Connes map) are valid, which also uses the latter 
\cite[Lemma 9.3]{gk}. 
 
Similarly, 
the descent functor of \cite[Section 12]{gk} is available, 
as \cite[Corollary 12.4]{gk} only needs 
a change of coefficient algebra \big (namely $\cale \otimes_B B$ to $\cale \otimes_B  (B \rtimes G)$\big) 
as a permanence property in the context of 
generalized invertible 
corner embeddings.  

\if 0
 as the whole discussion there uses only a change of coefficiant algebra (namely $\cale \otimes_B B$ to $\cale \otimes_B  (B \rtimes G)$). 
\fi 

The proof of 
\cite[Theorem 14.2]{gk} 
about ``Morita equivalence 
implies  $GK^G$-equivalence"  
uses only 
direct sum modules and change of coefficient algebras 
of modules  
out of given modules in $C_\lambda$ 
from \cite[Definition 14.1]{gk} (definition of functional 
 Morita 
equivalence) 
and then the 
inverse morphisms in $GK^G$-theory of their associated 
generalized corner embeddings 
and is thus also valid 
by the first assertion of this corollary. 
\end{proof}

\if 0
We highlight 
Corollary \ref{cor51}  
in the context of $GK^G$-theory: 

\begin{corollary}

If $G$ is a finite group then 
in $\Lambda$ - an thus in particualr in very special $GK^G$-theory - 
all corner embeddings $A \rightarrow (M_n(A),\Gamma)$ 
wit arbitry $G$-actions $\Gamma$ are invertible. 

\end{corollary}
\fi

\if 0 

\section{}

injectivity of fucntor zu klass mit allg corner embeddings 

injectivity of funcor on ktheory  

\section{}

exactness lemma auf level one zu all level

-> verbesserung ??+

\section{}

connection für spezielle module von class C  ? 

-->   jedoch proejction aud direkt summand

--> zudem nicht change coeff lagebra, sonder
	$A \otimes_\pi \cale$

--> jedoc wenn $\cale$ direct sum $\oplus A$ ist, 
\fi

\bibliographystyle{plain}
\bibliography{references}

 
\end{document}